\documentclass[11pt]{article}

\usepackage[T1]{fontenc}
\usepackage{lmodern}
\usepackage{amsmath,amssymb,amsthm,mathtools}
\usepackage{microtype}
\usepackage{geometry}
\usepackage{hyperref}

\hypersetup{
  pdftitle={Strongly Consistent Estimation of the Extended l1-Sum of
    phi-Mixing Coefficients from a Single Trajectory},
  pdfauthor={Senhan Yao},
  colorlinks=true,
  linkcolor=blue,
  citecolor=blue,
  urlcolor=blue
}

\newtheorem{theorem}{Theorem}[section]
\newtheorem{proposition}[theorem]{Proposition}
\newtheorem{lemma}[theorem]{Lemma}
\newtheorem{corollary}[theorem]{Corollary}

\theoremstyle{remark}
\newtheorem{remark}[theorem]{Remark}

\newcommand{\Pp}{\mathbb{P}}
\newcommand{\E}{\mathbb{E}}
\newcommand{\N}{\mathbb{N}}
\newcommand{\R}{\mathbb{R}}
\newcommand{\1}{\mathbf{1}}
\newcommand{\cF}{\mathcal{F}}
\newcommand{\cD}{\mathcal{D}}
\newcommand{\cG}{\mathcal{G}}
\newcommand{\cP}{\mathcal{P}}
\newcommand{\cA}{\mathcal{A}}
\newcommand{\cB}{\mathcal{B}}
\newcommand{\cC}{\mathcal{C}}
\newcommand{\cH}{\mathcal{H}}
\newcommand{\cM}{\mathcal{M}}
\newcommand{\abs}[1]{\left|#1\right|}

\title{Strongly Consistent Estimation of the Extended $\ell_1$-Sum of
$\phi$-Mixing Coefficients from a Single Trajectory}
\author{Senhan Yao}
\date{August 12, 2026}

\begin{document}
\maketitle

\begin{abstract}
Khaleghi and Lugosi asked whether the sum of the $\phi$-mixing
(uniform-mixing) coefficients of a real-valued discrete-time stationary
ergodic process can be consistently estimated from a single sample path.
We construct a deterministic sequence of Borel statistics that, for every
such process, converges almost surely to the extended sum
$\sum_{m\geq1}\phi(m)$, including divergence to $+\infty$ when the sum is
infinite.  The estimator combines finite dyadic cylinders with a vanishing
empirical cutoff on conditioning-event frequencies.  When the target is
finite, $\alpha(m)\leq\phi(m)$ supplies summable covariance control for the
growing finite classes, and a stable-division estimate yields the upper
bound.  Fixed positive-probability witnesses and Birkhoff's theorem give the
lower bound, including the infinite-target case.  No mixing rate or known
positive lower bound on conditioning-event probabilities is required.
\end{abstract}

\section{Introduction}

Let $X=(X_t)_{t\geq1}$ be a stationary ergodic process.  Mixing coefficients
quantify residual dependence between events separated in time; standard
references include Doukhan~\cite{Doukhan1994} and
Bradley~\cite{Bradley2005,Bradley2007}.  Nobel~\cite{Nobel2006} used
hypothesis testing to study polynomial decay rates for covariance-based
mixing conditions.  Khaleghi and Lugosi~\cite{KhaleghiLugosi2023}
constructed universal strongly consistent estimators of the $\ell_1$-norms
of the $\alpha$- and $\beta$-mixing coefficient sequences from a single
real-valued stationary ergodic sample path, but explicitly left the
analogous $\phi$-problem open.  Theorem~\ref{thm:main} gives a positive
answer to that question in the same unrestricted observation model; see
\cite[Sec.~I and Sec.~V (Outlook)]{KhaleghiLugosi2023}.  We are not aware
of a previous result giving a law-independent strongly consistent estimator
of this $\phi$-mixing $\ell_1$-sum under only stationarity and ergodicity.  They identify
conditioning on potentially rare events, whose probabilities may be
arbitrarily small, as the central obstruction.  A related plug-in perspective
for conditional probabilities is developed by
Gr\"unew\"alder~\cite{Grunewalder2018}.  For context,
Adams and Nobel~\cite{AdamsNobel2010} proved uniform convergence of relative
frequencies over countable finite-VC classes under stationary ergodic sampling
without mixing assumptions; their result concerns a fixed class, whereas the
difficulty here is uniform control along deterministic growing cylinder
classes together with conditional-probability ratios.

Specifically, we construct deterministic Borel statistics
$\widehat\Phi_n(X_1,\ldots,X_n)$ such that, for every real-valued stationary
ergodic process,
\[
  \widehat\Phi_n
  \longrightarrow
  \Phi:=\sum_{m=1}^{\infty}\phi(m)
  \qquad\text{almost surely in }[0,\infty].
\]
The same deterministic maps are used for every law; the probability-one
convergence set may depend on the law.  The construction also detects the
nonsummable case by diverging to $+\infty$.

\paragraph{Central mechanism.}
If $\Phi<\infty$, then $\alpha(m)\leq\phi(m)$ supplies summable covariance
control for the growing finite classes.  A vanishing empirical cutoff
$q_c:=2^{-c}$ and the inequality $\Pp(A\cap B)\leq\Pp(A)$ stabilize the
conditional-probability ratios without requiring a known population lower
bound.  The lower bound instead uses fixed positive-probability
finite-cylinder witnesses and Birkhoff's theorem; no distribution-free
ergodic rate is needed~\cite{Shields1996}, and finite partial sums also force
divergence when $\Phi=\infty$.

\paragraph{Relation to prior constructions.}
The dyadic-cylinder and plug-in architecture builds on Khaleghi and
Lugosi~\cite[Secs.~III-A--III-B and Sec.~V (Outlook)]{KhaleghiLugosi2023},
who identify rare conditioning events as the central obstacle.  A key
ingredient in the present construction is the combination of the vanishing
\emph{empirical} rare-event cutoff with the stable-division estimate, together
with summable-$\alpha$ control only in the finite-target branch and
fixed-witness ergodic recovery for the lower bound.  The remaining tools---Birkhoff's
theorem~\cite{Birkhoff1931}, Chebyshev's inequality, the union bound, and
Borel--Cantelli---are standard.

\section{Definitions and main theorem}

Throughout, $\N=\{1,2,\ldots\}$, $(\Omega,\cF,\Pp)$ is a
probability space, and $X=(X_t)_{t\geq1}$ is a real-valued stationary
ergodic process.  For integers $1\leq a\leq b$, write
\[
  X_a^b:=(X_a,\ldots,X_b),
  \qquad
  \cF_a^b:=\sigma(X_a,\ldots,X_b),
  \qquad
  \cF_a^\infty:=\sigma(X_a,X_{a+1},\ldots).
\]
Stationarity means invariance of all finite-dimensional distributions under
a common time shift.  Ergodicity means that the left shift on the trajectory
space, equipped with the law of $(X_1,X_2,\ldots)$, has only invariant events
of probability zero or one.

For events $A,B\in\cF$ with $\Pp(A)>0$, write
$\Pp(B\mid A):=\Pp(A\cap B)/\Pp(A)$.  For two sub-$\sigma$-fields
$\cA,\cB\subseteq\cF$, define
\begin{align}
  \alpha(\cA,\cB)
  &:=
  \sup_{A\in\cA,\;B\in\cB}
  \abs{\Pp(A\cap B)-\Pp(A)\Pp(B)},                                     \label{eq:alpha-sigma}\\
  \phi(\cA,\cB)
  &:=
  \sup_{\substack{A\in\cA,\;\Pp(A)>0\\B\in\cB}}
  \abs{\Pp(B\mid A)-\Pp(B)}.                                          \label{eq:phi-sigma}
\end{align}
The second coefficient is directional: the event in $\cA$ is the
conditioning event.  The strong-mixing coefficient $\alpha$ goes back to
Rosenblatt~\cite{Rosenblatt1956}, while the uniform-mixing coefficient
$\phi$ is classically associated with
Ibragimov~\cite{Ibragimov1962}; see Doukhan~\cite{Doukhan1994} and
Bradley~\cite{Bradley2005,Bradley2007} for systematic accounts.

For the main theorem we use the indexing appearing in the displayed
process definitions of Khaleghi and Lugosi~\cite{KhaleghiLugosi2023}.  Thus,
for $m\geq1$, define
\begin{align}
  \alpha_X(m)
  &:=
  \sup_{j\geq1}\alpha(\cF_1^j,\cF_{j+m}^\infty),                       \label{eq:alpha-m}\\
  \phi_X(m)
  &:=
  \sup_{j\geq1}\phi(\cF_1^j,\cF_{j+m}^\infty),                         \label{eq:phi-m}
\end{align}
Under this convention the first future coordinate is
$j+m$; in particular, $m=1$ corresponds to adjacent past and future
blocks.  We suppress the subscript $X$ when no ambiguity is possible and
write
\[
  \Phi_X:=\sum_{m=1}^{\infty}\phi_X(m)\in[0,\infty],
\]
which we call the extended $\ell_1$-sum of the $\phi$-mixing coefficients.
In particular, $\Phi_X<\infty$ implies $\phi_X(m)\to0$ as $m\to\infty$,
so the finite-target case lies in the usual $\phi$-mixing class.
Since every conditional-dependence score in \eqref{eq:phi-sigma} belongs
to $[0,1]$, each $\phi_X(m)\in[0,1]$ and the extended sum above is
well-defined.  When $\Phi_X<\infty$, this is the ordinary $\ell_1$-norm of
the coefficient sequence; when $\Phi_X=\infty$, the sequence is not an
element of $\ell_1$, which is why we use the term \emph{extended
$\ell_1$-sum} for the target throughout.

The observation model and target above match the substantive
$\phi$-question posed by Khaleghi and Lugosi
\cite[Sec.~I]{KhaleghiLugosi2023}: one finite prefix of a
single real-valued discrete-time stationary ergodic trajectory, with no
structural assumption on the law.  There is an apparent one-index
discrepancy in their presentation: the displayed process definitions place
the future at $j+m$, whereas some later finite-block formulas use $j+m+1$.
To avoid depending on which indexing was intended,
Corollary~\ref{cor:shifted} proves universal strong consistency for every
fixed finite lag shift and therefore covers both placements.  We additionally
treat the target as an extended nonnegative sum and require divergence of the
estimator when the series is infinite.

\begin{theorem}[Universal strong consistency]\label{thm:main}
There exists a deterministic sequence of Borel measurable functions
\[
  \widehat\Phi_n:\R^n\to[0,\infty)
\]
such that, for every real-valued discrete-time stationary ergodic process $X$,
\[
  \widehat\Phi_n(X_1,\ldots,X_n)
  \longrightarrow
  \Phi_X
  \qquad\text{$\Pp$-almost surely},
\]
where the convergence is in the extended half-line $[0,\infty]$.
In particular, if $\Phi_X=\infty$, then
\[
  \widehat\Phi_n(X_1,\ldots,X_n)\longrightarrow+\infty
  \qquad\text{almost surely},
\]
meaning that for every finite $R$ the inequality
$\widehat\Phi_n(X_1,\ldots,X_n)>R$ holds eventually almost surely.
\end{theorem}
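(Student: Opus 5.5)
The plan is to build $\widehat\Phi_n$ as a plug-in estimator over finite dyadic cylinders and prove two one-sided statements: $\liminf_n\widehat\Phi_n\geq\Phi_X$ almost surely for every law, and $\limsup_n\widehat\Phi_n\leq\Phi_X$ almost surely when $\Phi_X<\infty$. When $\Phi_X=\infty$ the first statement already gives divergence to $+\infty$.

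\textbf{The estimator.} Fix slowly growing integer sequences $k_n$ (dyadic resolution, with $X_t$ first mapped to $[0,1]$ by a fixed increasing homeomorphism), $\ell_n$ (block length), $M_n$ (number of lags) and $c_n$ (cutoff level), all tending to infinity. For $m\leq M_n$ and $j\leq\ell_n$, let $A$ range over unions of level-$k_n$ dyadic cylinders in coordinates $1,\dots,j$, and $B$ over unions of such cylinders in coordinates $j+m,\dots,j+m+\ell_n$. Replace $\Pp(A)$, $\Pp(A\cap B)$ and $\Pp(B)$ by the empirical frequencies $\widehat P_n$ of the corresponding shifted patterns along $X_1^n$. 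Set
\[
  \widehat\phi_n(m):=\max\Bigl\{\bigl|\widehat P_n(A\cap B)/\widehat P_n(A)-\widehat P_n(B)\bigr| : \widehat P_n(A)\geq q_{c_n}\Bigr\},
\]
where $q_c=2^{-c}$, and take $\widehat\Phi_n:=\sum_{m\leq M_n}\widehat\phi_n(m)$. Every piece is a finite maximum of Borel functions of the sample, so $\widehat\Phi_n$ is Borel.

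\textbf{Lower bound.} Fix $m_0$ and $\varepsilon>0$. For each $m\leq m_0$, choose $j$, a set $A\in\cF_1^j$ with $\Pp(A)>0$, and $B\in\cF_{j+m}^\infty$ whose score is within $\varepsilon$ of $\phi(m)$. Approximate $A$ and $B$ in probability by finite unions of dyadic cylinders in finitely many coordinates, keeping $\Pp(A)>0$; the conditional score moves by a controlled amount because $\Pp(A)$ is a fixed positive number. These finitely many witnesses are eventually admissible in the class at stage $n$. By Birkhoff's theorem their empirical frequencies converge almost surely, so eventually $\widehat P_n(A)\geq\Pp(A)/2\geq q_{c_n}$ and the empirical score is within $2\varepsilon$ of $\phi(m)$. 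Hence $\liminf_n\widehat\Phi_n\geq\sum_{m\leq m_0}\phi(m)-2m_0\varepsilon$. Let $\varepsilon\to0$, then $m_0\to\infty$. This handles $\Phi=\infty$ and no rate is needed, since the witnesses are fixed.

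\textbf{Upper bound (main obstacle).} Assume $\Phi<\infty$, so that $\alpha(m)\leq\phi(m)$ is summable. For an indicator of a cylinder pattern $C$, the variance of the empirical frequency is at most $n^{-1}\bigl(1+4\sum_{m\geq 1}\alpha(m-\text{block length})\bigr)$, which is $O(\ell_n/n)$. By Chebyshev,
\[
  \Pp\bigl(|\widehat P_n(C)-\Pp(C)|>\delta_n\bigr)\lesssim \ell_n/(n\delta_n^2).
\]
A union bound over the class costs $2^{O(\ell_n^2 2^{k_n})}$ terms. I would choose the parameters so that, along a geometric subsequence $n_r=2^r$, the resulting sum is finite, and then apply Borel--Cantelli. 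Between consecutive $n_r$, frequencies change by $O(n_r/n)$, and the parameters should be frozen on dyadic blocks so the classes do not move.

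The point I expect to be delicate is \emph{stable division}. On the event $\widehat P_n(A)\geq q_{c_n}$ with uniform error $\delta_n\ll q_{c_n}$, we get $\Pp(A)\geq q_{c_n}/2$. Writing
\[
  \frac{\hat a\hat b}{\hat a}-\frac{ab}{a}\quad\text{(with $\hat a=\widehat P_n(A)$, $a=\Pp(A)$)},
\]
the ratio error is $O(\delta_n/q_{c_n})$, using $\Pp(A\cap B)\leq\Pp(A)$ to bound the numerator. I would therefore choose $c_n$ so that $\delta_n/q_{c_n}\to0$ while $M_n\delta_n/q_{c_n}\to0$ as well. Each retained population score is then at most $\phi(m)$, because it is a genuine $\phi$-score of events in the appropriate $\sigma$-fields. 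This gives
\[
  \limsup_n\widehat\Phi_n\leq\sum_m\phi(m)+o(1).
\]
Tuning $k_n,\ell_n,M_n,c_n$ so that Borel--Cantelli summability and $M_n\delta_n2^{c_n}\to0$ hold simultaneously is the step I would work out first.
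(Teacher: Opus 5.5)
Your lower bound, and the core of your upper bound, are the paper's argument: the empirical cutoff $2^{-c}$, $\alpha\le\phi$ giving summable covariances, Chebyshev plus a union bound over a finite dyadic class, Borel--Cantelli, and stable division via $\Pp(A\cap B)\le\Pp(A)$. The step that fails as written is passing from the subsequence $n_r=2^r$ to all $n$. Your statistic at time $n$ uses all $n$ observations. For $n_r\le n<n_{r+1}$ the only deterministic comparison available is
\[
  \abs{\widehat P_n(C)-\widehat P_{n_r}(C)}\leq\frac{n-n_r}{n-r+1}.
\]
This is about $1/2$ near the end of the block, not $o(\delta_n)$. You also cannot union-bound over the roughly $n_r$ intermediate sample sizes. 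Chebyshev gives only a tail of order $1/(n\delta^2)$ per $n$, so the per-block total is of order $K/\delta^2$, which is not summable in $r$. Hence the uniform $\delta_n$-accuracy at every large $n$ that your stable-division step needs (with $\delta_n\ll q_{c_n}$) is not established, and the upper bound along the full sequence does not follow.

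The repair is to freeze the data as well as the parameters: for $n_r\le n<n_{r+1}$, evaluate the statistic on $X_1^{n_r}$ only. This is exactly what the paper does with $c(n)$ and the projection onto the first $N_{c(n)}$ coordinates. The statistic is then piecewise constant in $n$, so Borel--Cantelli along the schedule suffices. The lower bound is unaffected, because Birkhoff convergence holds along any deterministic subsequence tending to infinity. Two other fixes also work: use a subsequence with $n_{r+1}/n_r-1=o(\delta_{n_r})$ (e.g.\ $n_r=r^2$) and recheck summability, or invoke a maximal inequality for the partial sums.

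Two smaller bookkeeping slips are harmless once the parameters grow slowly enough:
\begin{itemize}
  \item The joint event $A\cap B$ uses windows of length $j+m+\ell_n\le2\ell_n+M_n$, so the variance is $O((\ell_n+M_n+\Phi)/n)$, not $O(\ell_n/n)$.
  \item The class size is doubly exponential, of order $\sum_r2^{2^{rk_n}}$, not $2^{O(\ell_n^2 2^{k_n})}$.
\end{itemize}
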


Thus, in the finite-sum case, Theorem~\ref{thm:main} provides a positive answer to the $\phi$-mixing $\ell_1$-estimation question posed by Khaleghi and Lugosi~\cite{KhaleghiLugosi2023} under the same stationary-ergodic single-trajectory observation model.

\begin{remark}[Robustness to fixed lag shifts]\label{rem:lag-shift}
For a fixed $d\in\{0,1,2,\ldots\}$, one may instead place the future
at $j+m+d$.  Corollary~\ref{cor:shifted} proves universal strong consistency
for the corresponding extended sum.  In particular, the construction covers
both the $j+m$ placement in the displayed process definitions of
Khaleghi and Lugosi and the $j+m+1$ placement appearing in some of their
finite-block formulas.
\end{remark}

\begin{remark}[One-sided and two-sided conventions]\label{rem:two-sided}
The one-sided coefficient in \eqref{eq:phi-m} coincides with the usual
stationary two-sided coefficient of any stationary two-sided extension:
\[
  \phi_X(m)
  =\phi\bigl(\sigma(\widetilde X_t:t\leq0),
                \sigma(\widetilde X_t:t\geq m)\bigr).
\]
In particular, the target is intrinsic to the one-sided stationary law and
does not depend on the chosen extension.  Appendix~\ref{app:two-sided}
gives the construction and proof.
\end{remark}

\paragraph{Related work and exact scope.}
Nearby estimation results differ in observation model, target, or structural
assumptions.  Ahsen and Vidyasagar~\cite{AhsenVidyasagar2014} estimate
$\alpha$-, $\beta$-, and $\phi$-dependence coefficients between a pair of
random variables from independent paired observations.  Gr\"unewalder and
Khaleghi~\cite{GrunewalderKhaleghi2026} estimate individual $\beta$-mixing
coefficients from one trajectory of a stationary geometrically ergodic
Markov process, with additional smoothness assumptions in the real-valued
case.

Arvanitis~\cite[Sec.~2]{Arvanitis2023}, in a kernel-density setting for
stationary uniformly ($\phi$-) mixing processes, assumes an absolutely
summable sequence of $\phi$-mixing coefficients and derives non-asymptotic
concentration bounds.  For a confidence-set application, a known upper bound
on the sum of the mixing coefficients is imposed; removing that restriction
by estimating the mixing coefficients is left for future research, with the
observation that estimation of the mixing-coefficient series might be
facilitated by Ahsen and Vidyasagar together with truncation.  This does not
solve the problem considered here: no universal single-trajectory estimator
of the extended sum $\sum_{m\geq1}\phi(m)$ is constructed or proved
consistent there, and both the inferential target and the accompanying
density/kernel regularity assumptions are different.

Qi, Shen, and Zheng~\cite[Appendix~D, eqs.~(65)--(66), Theorem~5, and
Remarks~1--2]{QiShenZheng2024} propose, for a fixed integer lag $k$, a
histogram estimator $\widehat\phi_d(k)$ of the finite-block quantity
$\phi_d(k)$.  Their Theorem~5---described there as adapted from a result of
McDonald et al.~\cite{McDonaldEtAl2015}---is stated as a consistency result
for $\widehat\phi_{d_n}(k)$ relative to the full coefficient
$\phi(k)$ under bounded support, sufficient smoothness of the density, and a
density bounded away from zero, with tuning sequences satisfying
\[
  n h_n^{d_n}\to\infty,\qquad d_n h_n\to0,\qquad
  d_n\to\infty,\qquad h_n\to0.
\]
The published supplementary PDF literally states the threshold
$|\widehat\phi_{d_n}(k)-\phi(k)|>0$ immediately after the phrase ``for any
$\epsilon>0$''.  The same display appears in the authors' SSRN manuscript.
Because $\epsilon$ is otherwise unused in that statement and the cited
McDonald et al.~consistency result is convergence in probability, the
threshold $>0$ appears to be a typographical error, presumably intended as
$>\epsilon$.  We do not rely on that display or on this interpretation; we
use the result here only to record the estimator, its stated regularity
assumptions, and its intended fixed-lag consistency claim.  Remark~1 explains
that the $\phi$-case requires handling conditional densities in addition to
the joint-density arguments used for $\beta$-mixing, while Remark~2 calls
bounded support a relatively strong condition and notes that their techniques
do not cover the unbounded-support extension.  Thus both the assumptions and
the fixed-lag target differ from Theorem~\ref{thm:main}.

Later papers continued to flag the broader problem of estimating
$\phi$-mixing coefficients themselves from a fully observed stationary
trajectory.  Khaleghi~\cite[Sec.~IV (Outlook)]{Khaleghi2025} notes that,
even with full observations, rare conditioning events obstruct coefficient
estimation and that it remains unclear whether the $\phi$-mixing coefficients
can be consistently estimated from stationary sample paths.  Karagulyan and
Alquier~\cite[Appendix~C]{KaragulyanAlquier2026} likewise state that an
empirical non-Markov PAC--Bayes bound based on $\phi$ would require estimating
the $\phi$-mixing coefficients, which they describe as an open question.
These later statements concern the broader coefficient-estimation problem,
not specifically the scalar $\ell_1$-sum.  The closest comparison for
Theorem~\ref{thm:main} is therefore Khaleghi and
Lugosi~\cite[Sec.~I and Sec.~V]{KhaleghiLugosi2023}, whose target and
single-trajectory stationary-ergodic observation model match the present
setting.

The theorem concerns the scalar extended sum.  We do \emph{not} claim that
$\widehat\phi_c(m)$ converges to every individual $\phi(m)$ under bare
ergodicity: Proposition~\ref{prop:pointwise-lower} gives the universal
$\liminf$ lower bound, while Corollary~\ref{cor:coordinate-consistency} gives
full coordinatewise convergence when $\Phi<\infty$.

\section{Reduction to a bounded state space}

The use of dyadic partitions is most convenient on $(0,1)$ and causes no
loss of generality.

\begin{proposition}[Borel-isomorphic reduction]\label{prop:reduction}
It suffices to prove Theorem~\ref{thm:main} for stationary ergodic processes with
values in $(0,1)$.
\end{proposition}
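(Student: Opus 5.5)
We fix once and for all a Borel bijection $h:\R\to(0,1)$ whose inverse is also Borel; a strictly increasing homeomorphism such as $h(x)=(1+e^{-x})^{-1}$ will do. For a real-valued stationary ergodic process $X$ we set $Y_t:=h(X_t)$. Suppose we already have deterministic Borel maps $\widehat\Psi_n:(0,1)^n\to[0,\infty)$ that satisfy Theorem~\ref{thm:main} for every $(0,1)$-valued stationary ergodic process. We then define
\[
  \widehat\Phi_n(x_1,\ldots,x_n):=\widehat\Psi_n\bigl(h(x_1),\ldots,h(x_n)\bigr).
\]
This is Borel measurable on $\R^n$, because it is the composition of Borel maps. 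It is also deterministic and independent of the law of $X$, since $h$ was fixed in advance. If needed, we extend $\widehat\Psi_n$ by $0$ outside $(0,1)^n$ so that it is defined on all of $\R^n$.

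The proof then has three steps.

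\emph{Step 1: $Y$ inherits the hypotheses.} The process $Y=(h(X_t))_{t\ge1}$ is stationary, because its finite-dimensional distributions are images of those of $X$ under the fixed coordinatewise map $h^{\otimes k}$, and this map commutes with time shifts. For ergodicity, the coordinatewise map $H:\R^{\N}\to(0,1)^{\N}$ is a Borel isomorphism that intertwines the left shifts. Hence every shift-invariant set for the law of $Y$ pulls back to a shift-invariant set for the law of $X$ with the same probability, and that probability is therefore $0$ or $1$.

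\emph{Step 2: the $\sigma$-fields coincide.} Because $h$ is injective with Borel inverse, $\sigma(Y_a,\ldots,Y_b)=\sigma(X_a,\ldots,X_b)$ for all $a\le b$, and likewise $\sigma(Y_a,Y_{a+1},\ldots)=\sigma(X_a,X_{a+1},\ldots)$. To see this, note that each $Y_t$ is $\sigma(X_t)$-measurable, and each $X_t=h^{-1}(Y_t)$ is $\sigma(Y_t)$-measurable. The coefficients in \eqref{eq:phi-sigma} and \eqref{eq:phi-m} depend on the process only through these $\sigma$-fields inside the common space $(\Omega,\cF,\Pp)$. It follows that $\phi_Y(m)=\phi_X(m)$ for every $m$, and hence $\Phi_Y=\Phi_X$. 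The same identity holds for $\alpha$, and for every lag shift $d$ considered in Remark~\ref{rem:lag-shift}.

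\emph{Step 3: conclusion.} On the almost sure event where $\widehat\Psi_n(Y_1,\ldots,Y_n)\to\Phi_Y$ in $[0,\infty]$, we have $\widehat\Phi_n(X_1,\ldots,X_n)=\widehat\Psi_n(Y_1,\ldots,Y_n)$, and therefore this sequence converges to $\Phi_X$. This includes the divergence statement when $\Phi_X=\infty$.

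There is no real obstacle here. The only point that needs care is Step 2: the argument requires that $h^{-1}$ be Borel, so that the generated $\sigma$-fields are equal rather than merely nested. The choice of a homeomorphism guarantees this. It is also essential that $h$ be fixed independently of the law, which keeps the estimator universal.
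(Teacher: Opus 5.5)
Your proposal is correct and follows essentially the same route as the paper. Both arguments fix a Borel bijection $h:\R\to(0,1)$ with Borel inverse (the paper uses $\tfrac12+\tfrac1\pi\arctan x$ rather than the logistic map), then transfer stationarity and ergodicity through the shift-commuting coordinatewise map $H$, use equality of the generated $\sigma$-fields to get $\phi_Y=\phi_X$, and precompose the estimator with $h$. Your extension of $\widehat\Psi_n$ by $0$ outside $(0,1)^n$ is harmless but unnecessary, since $h$ already takes values in $(0,1)$.
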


\begin{proof}
Let
\[
  h(x):=\frac12+\frac1\pi\arctan x,\qquad x\in\R.
\]
Then $h:\R\to(0,1)$ is a Borel bijection with Borel inverse.  Define
$Y_t=h(X_t)$.  For every index set $I\subseteq\N$,
\[
  \sigma(Y_t:t\in I)=\sigma(X_t:t\in I).
\]
Hence every $\alpha$- and $\phi$-coefficient is unchanged:
\[
  \alpha_Y(m)=\alpha_X(m),
  \qquad
  \phi_Y(m)=\phi_X(m).
\]
Let $H:\R^{\N}\to(0,1)^{\N}$ be the coordinatewise map
$H(x_1,x_2,\ldots)=(h(x_1),h(x_2),\ldots)$, and let $T$ denote the left
shift on either path space.  Then $H$ is a bimeasurable bijection and
$H\circ T=T\circ H$.  Hence shifted finite-dimensional distributions are
preserved and invariant events correspond under $H$; stationarity and
ergodicity therefore pass from $X$ to $Y$ (and conversely).  Thus an
estimator constructed for $Y_1,\ldots,Y_n$ yields one for
$X_1,\ldots,X_n$ by precomposing with $h$ in each coordinate.
\end{proof}

From now through Section~\ref{sec:completion} we work with a stationary ergodic
process $Y=(Y_t)_{t\geq1}$ taking values in $(0,1)$ and write simply
$\phi(m)$ and $\Phi$.  We also reuse the notation
\[
  \cF_a^b:=\sigma(Y_a,\ldots,Y_b),
  \qquad
  \cF_a^\infty:=\sigma(Y_a,Y_{a+1},\ldots),
\]
which is unambiguous because the coordinatewise Borel bijection in
Proposition~\ref{prop:reduction} preserves these $\sigma$-fields.

\section{Finite dyadic approximation of the \texorpdfstring{$\phi$}{phi}-coefficient}
\label{sec:approximation}

We first show carefully that finite dyadic cylinders recover the full
$\phi$-coefficient, despite the denominator in \eqref{eq:phi-sigma}.

\subsection{A general approximation lemma}

\begin{lemma}[Approximation by a generating algebra]\label{lem:measure-approx}
Let $\cC$ be an algebra of subsets of $\Omega$ and let
$\cH=\sigma(\cC)$.  For every $H\in\cH$ and every $\eta>0$, there exists
$C\in\cC$ such that
\[
  \Pp(H\triangle C)<\eta.
\]
\end{lemma}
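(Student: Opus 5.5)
We use the standard ``good sets'' argument. Define
\[
  \cG:=\bigl\{H\in\cH:\ \text{for every }\eta>0\text{ there is }C\in\cC\text{ with }\Pp(H\triangle C)<\eta\bigr\}.
\]
The goal is to show $\cG=\cH$. Every $C\in\cC$ lies in $\cG$: take $C$ itself. Since $\cH=\sigma(\cC)$, it is enough to prove that $\cG$ is a $\sigma$-algebra, or alternatively a monotone class containing the algebra $\cC$, in which case the monotone class theorem applies. We will show directly that $\cG$ is a $\sigma$-algebra.

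\emph{Complements.} If $H\in\cG$ and $\Pp(H\triangle C)<\eta$, then $H^c\triangle C^c=H\triangle C$. Moreover $C^c\in\cC$ because $\cC$ is an algebra. Hence $H^c\in\cG$. Note that $\Omega\in\cC$, so $\Omega\in\cG$.

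\emph{Countable unions.} Let $H_1,H_2,\ldots\in\cG$, put $H=\bigcup_k H_k$, and fix $\eta>0$.
\begin{itemize}
\item By continuity of $\Pp$ from below, choose $N$ with $\Pp\bigl(H\setminus\bigcup_{k\le N}H_k\bigr)<\eta/2$.
\item For each $k\le N$, choose $C_k\in\cC$ with $\Pp(H_k\triangle C_k)<\eta/(2N)$.
\item Set $C=\bigcup_{k\le N}C_k$, which lies in $\cC$ because $\cC$ is closed under finite unions.
\end{itemize}
The inclusion $\bigl(\bigcup_{k\le N}H_k\bigr)\triangle\bigl(\bigcup_{k\le N}C_k\bigr)\subseteq\bigcup_{k\le N}(H_k\triangle C_k)$ and the triangle inequality for the pseudometric $d(A,B)=\Pp(A\triangle B)$ together give
\[
  \Pp(H\triangle C)\le \Pp\Bigl(H\setminus\bigcup_{k\le N}H_k\Bigr)+\sum_{k\le N}\Pp(H_k\triangle C_k)<\eta .
\]
So $H\in\cG$.

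It follows that $\cG$ is a $\sigma$-algebra containing $\cC$, hence $\cG\supseteq\sigma(\cC)=\cH$, which proves the lemma.

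The only step requiring care is the countable union. A countable union of sets from $\cC$ need not lie in $\cC$, so we cannot approximate all the $H_k$ at once. The truncation to a finite union, using continuity of measure, is what resolves this. The rest is routine set algebra with symmetric differences.
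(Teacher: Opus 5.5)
Your proposal is correct and follows essentially the same route as the paper. Both arguments show that the class of $\cC$-approximable sets contains $\cC$, is closed under complements via $H^c\triangle C^c=H\triangle C$, and is closed under countable unions by truncating with continuity from below and approximating each of finitely many pieces to within $\eta/(2N)$; hence this class is a $\sigma$-field equal to $\cH$.
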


\begin{proof}
Let
\[
  \cM:=
  \left\{
    H\in\cH:
    \inf_{C\in\cC}\Pp(H\triangle C)=0
  \right\}.
\]
Clearly $\cC\subseteq\cM$.  If $H\in\cM$, then, for any $C\in\cC$,
\[
  \Pp(H^c\triangle C^c)=\Pp(H\triangle C),
\]
so $H^c\in\cM$.

Now let $(H_i)_{i\geq1}\subseteq\cM$ and put $H=\bigcup_{i\geq1}H_i$.
Given $\eta>0$, continuity from below gives an $M$ such that
\[
  \Pp\!\left(H\setminus\bigcup_{i=1}^M H_i\right)<\frac{\eta}{2}.
\]
For each $1\leq i\leq M$, choose $C_i\in\cC$ with
\[
  \Pp(H_i\triangle C_i)<\frac{\eta}{2M}.
\]
Since $\cC$ is an algebra, $C:=\bigcup_{i=1}^M C_i\in\cC$, and
\[
  H\triangle C
  \subseteq
  \left(H\setminus\bigcup_{i=1}^M H_i\right)
  \cup
  \bigcup_{i=1}^M(H_i\triangle C_i).
\]
Thus $\Pp(H\triangle C)<\eta$.  Hence $\cM$ is a $\sigma$-field
containing $\cC$, and therefore $\cM=\sigma(\cC)=\cH$.
\end{proof}

\begin{lemma}[Continuity of the conditional-dependence functional]
\label{lem:conditional-continuity}
Suppose $A_n,A,B_n,B\in\cF$ satisfy
\[
  \Pp(A_n\triangle A)\to0,\qquad
  \Pp(B_n\triangle B)\to0,
  \qquad
  \Pp(A)>0.
\]
Then $\Pp(A_n)>0$ eventually and
\[
  \abs{\Pp(B_n\mid A_n)-\Pp(B_n)}
  \longrightarrow
  \abs{\Pp(B\mid A)-\Pp(B)}.
\]
\end{lemma}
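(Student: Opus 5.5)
The plan is to reduce everything to convergence of the three scalar quantities $\Pp(A_n)$, $\Pp(B_n)$ and $\Pp(A_n\cap B_n)$, and then apply continuity of elementary arithmetic. The one basic tool is the inequality $\abs{\Pp(E)-\Pp(F)}\leq\Pp(E\triangle F)$, valid for any events $E,F\in\cF$; it holds because $\abs{\1_E-\1_F}=\1_{E\triangle F}$, and we take expectations.

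First apply this inequality with $E=A_n$, $F=A$ to get $\Pp(A_n)\to\Pp(A)$. Since $\Pp(A)>0$, there is an $N$ such that $\Pp(A_n)>\Pp(A)/2>0$ for all $n\geq N$. This proves the first claim, and it also ensures that $\Pp(B_n\mid A_n)$ is defined for $n\geq N$. In the same way, $\Pp(B_n)\to\Pp(B)$.

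Next treat the intersection. Use the set inclusion
\[
  (A_n\cap B_n)\triangle(A\cap B)\subseteq(A_n\triangle A)\cup(B_n\triangle B).
\]
To check it, take a point that lies in exactly one of $A_n\cap B_n$ and $A\cap B$. Then it must disagree in membership either between $A_n$ and $A$ or between $B_n$ and $B$. The union bound then gives $\Pp(A_n\cap B_n)\to\Pp(A\cap B)$.

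Finally, for $n\geq N$ write $\Pp(B_n\mid A_n)=\Pp(A_n\cap B_n)/\Pp(A_n)$. The map $(x,y,z)\mapsto\abs{x/y-z}$ is continuous on $\{y>0\}$, and the limit point $(\Pp(A\cap B),\Pp(A),\Pp(B))$ has $y=\Pp(A)>0$. So the claimed convergence follows by continuity.

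If a quantitative form is preferred, use $y_n\geq\Pp(A)/2$ to obtain
\[
  \abs{x_n/y_n-x/y}\leq\frac{\abs{x_n-x}}{y_n}+\frac{x\abs{y_n-y}}{y_ny}
  \leq\frac{2}{\Pp(A)}\abs{x_n-x}+\frac{2}{\Pp(A)^2}\abs{y_n-y},
\]
where $x_n=\Pp(A_n\cap B_n)$, $y_n=\Pp(A_n)$, $x=\Pp(A\cap B)$, $y=\Pp(A)$. Combine this with $\bigl|\abs{u}-\abs{v}\bigr|\leq\abs{u-v}$ to pass through the outer absolute value.

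The argument has no real obstacle. The only delicate point is the denominator, and it is handled entirely by the hypothesis $\Pp(A)>0$, which gives the eventual uniform lower bound $\Pp(A_n)\geq\Pp(A)/2$. This bound is what prevents the ratio from blowing up.
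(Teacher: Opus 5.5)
Your proposal is correct and follows essentially the same route as the paper: convergence of $\Pp(A_n)$, $\Pp(B_n)$ and $\Pp(A_n\cap B_n)$ via the symmetric-difference bound and the inclusion $(A_n\cap B_n)\triangle(A\cap B)\subseteq(A_n\triangle A)\cup(B_n\triangle B)$, followed by division by the eventually positive $\Pp(A_n)$. Your optional quantitative bound using $\Pp(A_n)\geq\Pp(A)/2$ is a valid refinement; the paper does not include it.
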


\begin{proof}
We have
\[
  \abs{\Pp(A_n)-\Pp(A)}
  \leq \Pp(A_n\triangle A)\to0,
\]
so $\Pp(A_n)\to\Pp(A)>0$.  Also
\[
  (A_n\cap B_n)\triangle(A\cap B)
  \subseteq
  (A_n\triangle A)\cup(B_n\triangle B),
\]
and hence
\[
  \Pp(A_n\cap B_n)\to\Pp(A\cap B).
\]
Similarly $\Pp(B_n)\to\Pp(B)$.  Division by the eventually positive
quantity $\Pp(A_n)$ now gives
\[
  \frac{\Pp(A_n\cap B_n)}{\Pp(A_n)}
  \longrightarrow
  \frac{\Pp(A\cap B)}{\Pp(A)},
\]
and the claim follows by continuity of the absolute value.
\end{proof}

\begin{proposition}[Restriction of $\phi$ to generating algebras]
\label{prop:phi-algebra}
Let $\cC$ and $\cD$ be algebras with
$\sigma(\cC)=\cA$ and $\sigma(\cD)=\cB$.  Then
\[
  \phi(\cA,\cB)
  =
  \sup_{\substack{C\in\cC,\;\Pp(C)>0\\D\in\cD}}
  \abs{\Pp(D\mid C)-\Pp(D)}.
\]
\end{proposition}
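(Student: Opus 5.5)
We prove the two inequalities separately. Write $S$ for the supremum on the right-hand side, taken over $C\in\cC$ with $\Pp(C)>0$ and $D\in\cD$. Since $\cC\subseteq\cA$ and $\cD\subseteq\cB$, every pair $(C,D)$ admissible for $S$ is also admissible in the definition \eqref{eq:phi-sigma} of $\phi(\cA,\cB)$. Hence $S\leq\phi(\cA,\cB)$ immediately.

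For the reverse inequality, fix $A\in\cA$ with $\Pp(A)>0$ and $B\in\cB$.

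\begin{itemize}
\item \emph{Approximation.} By Lemma~\ref{lem:measure-approx}, applied to the algebra $\cC$ with $\sigma(\cC)=\cA$, we can choose $C_n\in\cC$ with $\Pp(C_n\triangle A)<1/n$. Applying the same lemma to $\cD$ with $\sigma(\cD)=\cB$, we choose $D_n\in\cD$ with $\Pp(D_n\triangle B)<1/n$.
\item \emph{Passage to the limit.} Lemma~\ref{lem:conditional-continuity} applies because $\Pp(A)>0$. It gives $\Pp(C_n)>0$ for all large $n$, and
\[
  \abs{\Pp(D_n\mid C_n)-\Pp(D_n)}\longrightarrow\abs{\Pp(B\mid A)-\Pp(B)}.
\]
\item \emph{Comparison with $S$.} For large $n$, each term on the left is admissible in $S$ and is therefore at most $S$. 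Passing to the limit yields $\abs{\Pp(B\mid A)-\Pp(B)}\leq S$.
\end{itemize}

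Taking the supremum over all admissible $(A,B)$ gives $\phi(\cA,\cB)\leq S$, and the two inequalities together prove the proposition.

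The only delicate point is the denominator: an approximating event $C_n$ might have probability zero, or very small probability, and this would spoil the convergence of the ratio. This is exactly what Lemma~\ref{lem:conditional-continuity} handles. The condition $\Pp(A)>0$ keeps $\Pp(C_n)$ bounded away from zero for large $n$, so only finitely many early indices need to be discarded, and the bound still holds.
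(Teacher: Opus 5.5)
Your proposal is correct and follows the paper's proof essentially verbatim. One inequality comes from the inclusions $\cC\subseteq\cA$, $\cD\subseteq\cB$, and the other comes from Lemma~\ref{lem:measure-approx} combined with Lemma~\ref{lem:conditional-continuity}, with sufficiently late approximating pairs admissible in the right-hand supremum; the only cosmetic difference is your explicit $1/n$ approximation rate, which the paper leaves implicit.
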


\begin{proof}
The right-hand side is at most $\phi(\cA,\cB)$ because
$\cC\subseteq\cA$ and $\cD\subseteq\cB$.

For the reverse inequality, fix $A\in\cA$ with $\Pp(A)>0$ and
$B\in\cB$.  By Lemma~\ref{lem:measure-approx}, choose sequences
$C_n\in\cC$ and $D_n\in\cD$ such that
\[
  \Pp(C_n\triangle A)\to0,
  \qquad
  \Pp(D_n\triangle B)\to0.
\]
By Lemma~\ref{lem:conditional-continuity},
\[
  \abs{\Pp(D_n\mid C_n)-\Pp(D_n)}
  \to
  \abs{\Pp(B\mid A)-\Pp(B)}.
\]
Because $\Pp(C_n)>0$ eventually, every sufficiently late term is admissible
in the right-hand supremum.  Hence that supremum is at least
$\abs{\Pp(B\mid A)-\Pp(B)}$.  Taking the supremum over $A$ and $B$
proves the reverse inequality.
\end{proof}

\begin{remark}\label{rem:rare-population}
The denominator issue causes no difficulty in
Proposition~\ref{prop:phi-algebra}: for each fixed target accuracy one fixes one
near-optimal conditioning event $A$ with $\Pp(A)>0$.  Its probability may
be extremely small, but it is a fixed positive number.  No uniform
continuity as $\Pp(A)\downarrow0$ is asserted or needed.
\end{remark}

\subsection{Dyadic cylinder classes}

For $\ell\geq1$, let
\[
  \cP_\ell
  :=
  \left\{
    [a2^{-\ell},(a+1)2^{-\ell})\cap(0,1):
    a=0,\ldots,2^\ell-1
  \right\}.
\]
For a block length $r\geq1$, let $\cD_{r,\ell}$ be the finite
$\sigma$-field on $(0,1)^r$ generated by the product partition
$\cP_\ell^{\otimes r}$.  Thus
\[
  \abs{\cD_{r,\ell}}=2^{\,2^{r\ell}}.
\]
The classes are nested in $\ell$:
\[
  \cD_{r,\ell}\subseteq\cD_{r,\ell+1}.
\]

Fix $m,j,k\geq1$, $A\in\cD_{j,\ell}$ and
$B\in\cD_{k,\ell}$.  Put
\[
  r=r(m,j,k):=j+m+k-1.
\]
Within $(0,1)^r$, define the lifted events
\begin{align*}
  A^{\uparrow}
  &:=
  \left\{y_1^r:y_1^j\in A\right\},\\
  B^{\uparrow}
  &:=
  \left\{y_1^r:y_{j+m}^{j+m+k-1}\in B\right\},\\
  J_{m,j,k}(A,B)
  &:=
  A^\uparrow\cap B^\uparrow.
\end{align*}
All three belong to $\cD_{r,\ell}$.

\begin{lemma}[Common-span reduction]\label{lem:common-span}
Fix $c\geq1$ and $1\leq m,j,k\leq c$.  If
$A\in\cD_{j,c}$ and $B\in\cD_{k,c}$, then, with
\[
  r=j+m+k-1\leq3c,
\]
the three events
\[
  A^\uparrow,\qquad B^\uparrow,\qquad J_{m,j,k}(A,B)
\]
all belong to the single finite $\sigma$-field $\cD_{r,c}$.  Consequently
all three empirical frequencies entering the corresponding score are
coordinates indexed by $\cG_c$.
\end{lemma}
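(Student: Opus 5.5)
The plan is to verify directly that the three lifted events are unions of cells of the product partition $\cP_c^{\otimes r}$ on $(0,1)^r$, which is exactly what membership in $\cD_{r,c}$ means. First, the span bound: since $1\leq m,j,k\leq c$, we get $r=j+m+k-1\leq 3c-1\leq 3c$. Also $j\leq r$, and the future block occupies coordinates $j+m$ through $j+m+k-1=r$. Since $m\geq1$, this block starts after coordinate $j$ and ends exactly at $r$, so both index windows fit inside $\{1,\dots,r\}$.

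Next, the membership of $A^\uparrow$. Write $\pi_{1}^{j}:(0,1)^r\to(0,1)^j$ for the projection onto the first $j$ coordinates, so that $A^\uparrow=(\pi_1^j)^{-1}(A)$. Every atom of $\cD_{j,c}$ is a product $I_1\times\cdots\times I_j$ with $I_i\in\cP_c$. Its preimage under $\pi_1^j$ is
\[
  I_1\times\cdots\times I_j\times(0,1)^{r-j}.
\]
Because $(0,1)=\bigcup_{I\in\cP_c}I$ is a finite disjoint union, this preimage is a finite union of atoms of $\cP_c^{\otimes r}$. Now $A$ is a finite union of atoms of $\cD_{j,c}$, and preimages commute with unions, so $A^\uparrow\in\cD_{r,c}$.

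The same argument handles $B^\uparrow$. We use the projection $\pi_{j+m}^{r}$ onto coordinates $j+m,\dots,r$, which form a set of $k$ consecutive coordinates. The free coordinates are now $1,\dots,j+m-1$, and each of them is again filled by $(0,1)$, written as the finite union of the intervals in $\cP_c$. Finally, $J_{m,j,k}(A,B)=A^\uparrow\cap B^\uparrow$ lies in $\cD_{r,c}$ because $\cD_{r,c}$ is a $\sigma$-field, and in particular closed under finite intersections.

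For the ``consequently'' clause I would invoke the definition of $\cG_c$, which presumably indexes the finite family of events in $\bigcup_{r\leq 3c}\cD_{r,c}$ whose empirical frequencies form the statistic's coordinates. Membership of all three events in the single $\cD_{r,c}$ with $r\leq 3c$ then places their frequencies among those coordinates, all computed from the same $r$-block sliding window. The only real subtlety is bookkeeping rather than depth. One must check that the endpoint $j+m+k-1$ equals $r$, so that no coordinate beyond $r$ is needed, and that $\cD_{r,c}$ uses the same resolution $c$ as $A$ and $B$; both are immediate from the definitions, so I expect no genuine obstacle.
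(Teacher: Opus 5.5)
Your proposal is correct and follows the same route as the paper. The lifted events constrain only coordinates of the common block $y_1^r$, and each constraint is a union of atoms of $\cP_c$; you make this explicit via preimages of atoms under coordinate projections, plus closure under intersection for $J_{m,j,k}(A,B)$, and then $r\leq 3c$ places every pair $(r,C)$ in $\cG_c$. Your guess about $\cG_c$ matches the paper's definition exactly, $\cG_c=\{(r,C):1\leq r\leq 3c,\ C\in\cD_{r,c}\}$, so the ``consequently'' clause follows immediately.
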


\begin{proof}
The lifted events constrain only coordinates of the common block
$y_1^r$, and every coordinate constraint is a union of atoms of
$\cP_c$.  Hence each lifted event belongs to $\cD_{r,c}$; the bound
$r\leq3c$ places each pair $(r,C)$ in $\cG_c$.
\end{proof}

Whenever a set
$C\subseteq(0,1)^r$ appears as the argument of $\Pp$ below, we use the
shorthand
\[
  \Pp(C):=\Pp(Y_1^r\in C),
\]
with the block length $r$ understood from context.  With this convention,
stationarity gives
\[
  \Pp(B^\uparrow)=\Pp(Y_1^k\in B).
\]
Whenever $\Pp(A^\uparrow)>0$, define
\begin{equation}\label{eq:population-score}
  d_m(A,B)
  :=
  \abs{
    \frac{\Pp(J_{m,j,k}(A,B))}{\Pp(A^\uparrow)}
    -
    \Pp(B^\uparrow)
  }.
\end{equation}

For $c\geq m$ and $0<q\leq1$, set
\begin{equation}\label{eq:population-truncated}
  \phi_c^{(q)}(m)
  :=
  \max_{\substack{
      1\leq j,k\leq c\\
      A\in\cD_{j,c},\,B\in\cD_{k,c}\\
      \Pp(A^\uparrow)\geq q
  }}
  d_m(A,B).
\end{equation}
The maximum exists because the search class is finite.  The set is
nonempty since the full-space event may be used for $A$.  The quantity
$\phi_c^{(q)}(m)$ is a population proof device only; it is not available to
or used by the empirical estimator.

\begin{proposition}[Population truncation loses nothing asymptotically]
\label{prop:population-truncation}
Let $(q_c)_{c\geq1}$ be any sequence in $(0,1]$ with $q_c\to0$.  Then, for
every fixed $m\geq1$,
\[
  \phi_c^{(q_c)}(m)\longrightarrow\phi(m).
\]
\end{proposition}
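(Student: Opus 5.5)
We prove the two inequalities $\limsup_c \phi_c^{(q_c)}(m)\leq\phi(m)$ and $\liminf_c \phi_c^{(q_c)}(m)\geq\phi(m)$ separately. Fix $m\geq1$ throughout.

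\emph{Upper bound.} Fix $c\geq m$ and an admissible triple $j,k,A,B$ in \eqref{eq:population-truncated}. Put $A':=\{Y_1^j\in A\}\in\cF_1^j$ and $B':=\{Y_{j+m}^{j+m+k-1}\in B\}\in\cF_{j+m}^\infty$. Then $\Pp(A^\uparrow)=\Pp(A')\geq q_c>0$ and $\Pp(J_{m,j,k}(A,B))=\Pp(A'\cap B')$. By stationarity, $\Pp(B^\uparrow)=\Pp(Y_1^k\in B)=\Pp(B')$. Hence
\[
  d_m(A,B)=\abs{\Pp(B'\mid A')-\Pp(B')}\leq\phi(\cF_1^j,\cF_{j+m}^\infty)\leq\phi(m).
\]
So $\phi_c^{(q)}(m)\leq\phi(m)$ for every $c$ and every $q$, with no limit needed.

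\emph{Lower bound.} Fix $\varepsilon>0$. By \eqref{eq:phi-m}, choose $j_0$ with $\phi(\cF_1^{j_0},\cF_{j_0+m}^\infty)>\phi(m)-\varepsilon/3$.

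\begin{itemize}
\item Let $\cC$ be the algebra $\bigcup_{\ell}\{\{Y_1^{j_0}\in A\}:A\in\cD_{j_0,\ell}\}$. It is an algebra because the $\cD_{j_0,\ell}$ are nested, and it generates $\cF_1^{j_0}$, since dyadic rectangles generate the Borel sets of $(0,1)^{j_0}$.
\item Let $\cD$ be the algebra $\bigcup_{k,\ell}\{\{Y_{j_0+m}^{j_0+m+k-1}\in B\}:B\in\cD_{k,\ell}\}$. It is an algebra because cylinders of lengths $k\leq k'$ and levels $\ell\leq\ell'$ both embed into $\cD_{k',\ell'}$, and it generates $\cF_{j_0+m}^\infty$.
\end{itemize}

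Proposition~\ref{prop:phi-algebra} then yields a fixed $k_0,\ell_0$, fixed $A_0\in\cD_{j_0,\ell_0}$ with $p_0:=\Pp(Y_1^{j_0}\in A_0)>0$, and fixed $B_0\in\cD_{k_0,\ell_0}$, such that
\[
  \abs{\Pp(B_0'\mid A_0')-\Pp(B_0')}>\phi(m)-2\varepsilon/3,
\]
where $A_0',B_0'$ denote the corresponding events in $\cF_1^{j_0}$ and $\cF_{j_0+m}^\infty$.

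Now take $c\geq\max(m,j_0,k_0,\ell_0)$ large enough that also $q_c\leq p_0$; this is possible because $q_c\to0$. By nestedness, $A_0\in\cD_{j_0,c}$ and $B_0\in\cD_{k_0,c}$, and the constraint $\Pp(A_0^\uparrow)=p_0\geq q_c$ holds. So this pair is admissible in \eqref{eq:population-truncated}. By stationarity, as in the upper bound, $d_m(A_0,B_0)$ equals the displayed score. Therefore $\phi_c^{(q_c)}(m)>\phi(m)-\varepsilon$ for all large $c$, and combining with the upper bound proves the claim.

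The main obstacle is the interaction between the vanishing cutoff and the rare conditioning events. It is resolved by the order of quantifiers, as in Remark~\ref{rem:rare-population}: the witness $A_0$ is chosen first, with fixed probability $p_0>0$, and only afterwards is $c$ taken large enough that $q_c\leq p_0$. The remaining care is verifying that the union classes $\cC$ and $\cD$ are genuinely algebras and generate the right $\sigma$-fields.
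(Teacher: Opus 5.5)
Your proof is correct and follows essentially the same route as the paper. The upper bound holds because every truncated pair is admissible in $\phi(m)$. For the lower bound you choose a near-optimal $j$, apply Proposition~\ref{prop:phi-algebra} to the dyadic past and future-cylinder algebras to get one fixed witness $(A_0,B_0)$ at a common level with $\Pp(A_0^\uparrow)>0$, and then let $c$ grow until nestedness and $q_c\leq p_0$ admit that same pair. The differences ($\varepsilon/3$ versus $\eta/2$ splits, $q_c\leq p_0$ versus $q_c<p$) are cosmetic.
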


\begin{proof}
Every pair in \eqref{eq:population-truncated} is admissible in the
definition of $\phi(m)$, so
\[
  \phi_c^{(q_c)}(m)\leq\phi(m).
\]

For the reverse inequality, fix $\eta>0$.  For each $j\geq1$, define the
past dyadic algebra
\[
  \cC_j^-
  :=
  \bigcup_{\ell\geq1}
  \left\{
    \{Y_1^j\in A\}:A\in\cD_{j,\ell}
  \right\},
\]
and, with $s:=j+m$, define the future dyadic-cylinder class
\[
  \cC_s^+
  :=
  \bigcup_{k,\ell\geq1}
  \left\{
    \{Y_s^{s+k-1}\in B\}:B\in\cD_{k,\ell}
  \right\}.
\]
Because the classes $\cD_{j,\ell}$ are nested in $\ell$, $\cC_j^-$ is an
algebra.  The class $\cC_s^+$ is also an algebra: given two of its events,
choose a common block length $k$ and a common dyadic level $\ell$ by
adjoining unconstrained trailing coordinates and refining the two dyadic
partitions.  Both events are then represented by members of the same
finite $\sigma$-field $\cD_{k,\ell}$, so their union and complements are
again in $\cC_s^+$.

Moreover,
\[
  \sigma(\cC_j^-)=\cF_1^j,
  \qquad
  \sigma(\cC_s^+)=\cF_s^\infty.
\]
Indeed, dyadic intervals generate the Borel $\sigma$-field on $(0,1)$,
so the first identity follows from finite-dimensional dyadic rectangles.
For the second, if $t=s+d$ and $I$ is a dyadic interval, then
$\{Y_t\in I\}\in\cC_s^+$ by taking block length $d+1$ and leaving the
first $d$ coordinates unrestricted.  Thus $\sigma(\cC_s^+)$ contains
every coordinate $\sigma$-field $\sigma(Y_t)$ for $t\geq s$, while the
reverse inclusion is immediate from the definition.

By the definition of the supremum in \eqref{eq:phi-m}, choose $j$ such
that
\[
  \phi(\cF_1^j,\cF_{j+m}^\infty)
  >
  \phi(m)-\frac{\eta}{2}.
\]
By Proposition~\ref{prop:phi-algebra}, the latter supremum is the
supremum over finite dyadic-cylinder events.  Hence there exist a finite
future block length $k$, finite dyadic levels, and admissible events whose
conditional-dependence score exceeds
$\phi(\cF_1^j,\cF_{j+m}^\infty)-\eta/2$; no attainment of the supremum
is being assumed.  Refining the two dyadic levels
to a common $\ell$, we obtain
$A\in\cD_{j,\ell}$ and $B\in\cD_{k,\ell}$ such that
\[
  p:=\Pp(A^\uparrow)>0
\]
and
\[
  d_m(A,B)>\phi(m)-\eta.
\]
For all sufficiently large $c$ we have
$c\geq\max\{m,j,k,\ell\}$ and $q_c<p$.  Because the dyadic
$\sigma$-fields are nested, the same sets $A$ and $B$, not merely
approximations to them, belong to $\cD_{j,c}$ and $\cD_{k,c}$ for every
$c\geq\ell$.  Thus this same pair is included in
\eqref{eq:population-truncated}.  Hence
\[
  \liminf_{c\to\infty}\phi_c^{(q_c)}(m)
  \geq
  \phi(m)-\eta.
\]
Letting $\eta\downarrow0$ completes the proof.
\end{proof}

\section{Empirical cylinder probabilities and a variance bound}
\label{sec:concentration}

For $r\geq1$, a Borel set $C\subseteq(0,1)^r$, and $n\geq r$,
define the deterministic sliding-frequency map
\begin{equation}\label{eq:empirical-frequency-map}
  \mathsf P_{n,r,C}:(0,1)^n\to[0,1],
  \qquad
  \mathsf P_{n,r,C}(y_1^n)
  :=
  \frac{1}{n-r+1}
  \sum_{i=0}^{n-r}
  \1\{y_{i+1}^{i+r}\in C\}.
\end{equation}
It is Borel measurable because it is a finite average of indicators of
Borel cylinder sets.  Along the observed process we use the shorthand
\begin{equation}\label{eq:empirical-frequency}
  \widehat P_{n,r}(C)
  :=
  \mathsf P_{n,r,C}(Y_1^n).
\end{equation}

We use Birkhoff's theorem in the following standard form.  If
$(S,\mathcal S,\mu,T)$ is a probability-preserving dynamical system and
$f\in L^1(\mu)$, then
\[
  \frac1N\sum_{i=0}^{N-1} f\circ T^i
  \longrightarrow
  \mathbb E_\mu[f\mid\mathcal I_T]
  \qquad \mu\text{-almost surely},
\]
where $\mathcal I_T$ is the $T$-invariant $\sigma$-field.  If $T$ is
ergodic, the limit equals the constant $\int f\,d\mu$ almost surely
\cite{Birkhoff1931}.

\begin{lemma}[Simultaneous ergodic convergence on dyadic cylinders]
\label{lem:birkhoff-countable}
There exists an event $\Omega_0$ with $\Pp(\Omega_0)=1$ such that, for
every $r,\ell\geq1$ and every $C\in\cD_{r,\ell}$,
\[
  \widehat P_{n,r}(C)
  \longrightarrow
  \Pp(Y_1^r\in C)
  \qquad\text{on }\Omega_0.
\]
\end{lemma}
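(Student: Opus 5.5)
The plan is to apply Birkhoff's theorem once for each fixed cylinder and then intersect countably many full-measure events. The key observation is that the index set
\[
  \mathcal{I}:=\{(r,\ell,C): r,\ell\geq1,\ C\in\cD_{r,\ell}\}
\]
is countable. For fixed $(r,\ell)$ the $\sigma$-field $\cD_{r,\ell}$ is finite, with $2^{2^{r\ell}}$ elements, and there are countably many pairs $(r,\ell)$.

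Fix $(r,\ell,C)\in\mathcal{I}$ and work on the path space. Take $S=(0,1)^{\N}$ with the product Borel $\sigma$-field, let $\mu$ be the law of $(Y_1,Y_2,\ldots)$, and let $T$ be the left shift. Stationarity makes $T$ measure preserving, and ergodicity of $Y$ is exactly ergodicity of $(S,\mu,T)$. Put
\[
  f(y):=\1\{y_1^r\in C\},
\]
which is a bounded Borel function, hence in $L^1(\mu)$. Then $f(T^i y)=\1\{y_{i+1}^{i+r}\in C\}$. With $N:=n-r+1$, this gives
\[
  \widehat P_{n,r}(C)=\frac1N\sum_{i=0}^{N-1}f\circ T^i(Y),
\]
and $N\to\infty$ as $n\to\infty$ because $r$ is fixed. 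By Birkhoff's theorem in the ergodic form stated above, this average converges $\mu$-a.s.\ to $\int f\,d\mu=\Pp(Y_1^r\in C)$. Pulling back through the measurable map $\omega\mapsto(Y_t(\omega))_{t\geq1}$, there is an event $\Omega_{r,\ell,C}\in\cF$ with $\Pp(\Omega_{r,\ell,C})=1$ on which the convergence holds.

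Finally, set
\[
  \Omega_0:=\bigcap_{(r,\ell,C)\in\mathcal{I}}\Omega_{r,\ell,C}.
\]
This is a countable intersection of probability-one events, so $\Pp(\Omega_0)=1$ by countable subadditivity applied to the complements. On $\Omega_0$ the convergence holds simultaneously for every $r$, $\ell$ and $C\in\cD_{r,\ell}$, which is the claim.

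There is no real obstacle here. The only points needing care are checking countability of $\mathcal{I}$ and translating the one-sided stationary ergodic process into the shift-space form that Birkhoff's theorem requires. In particular, the convergence set must not depend on $n$, which is automatic because the index shift $N=n-r+1$ involves only the fixed $r$.
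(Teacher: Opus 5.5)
Your proposal is correct and follows the paper's proof essentially step for step: Birkhoff's theorem on the canonical path space for each fixed cylinder indicator, pullback along the trajectory map, and a countable intersection over the countable set of triples $(r,\ell,C)$ with $C\in\cD_{r,\ell}$. Your explicit remark that $N=n-r+1\to\infty$ with $r$ fixed is the same reindexing the paper uses implicitly.
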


\begin{proof}
Let $\mu:=\Pp\circ Y^{-1}$ be the law of the trajectory
$Y=(Y_t)_{t\geq1}$ on the canonical path space $(0,1)^{\N}$, and let
$T$ denote the left shift.  Stationarity makes $\mu$ $T$-invariant, and
ergodicity of the process means that $T$ is ergodic under $\mu$.  For
fixed $r,\ell,C$, define
\[
  f_C(y_1,y_2,\ldots):=\1\{y_1^r\in C\}.
\]
By Birkhoff's pointwise ergodic theorem~\cite{Birkhoff1931},
\[
  \frac{1}{n-r+1}\sum_{i=0}^{n-r} f_C(T^i y)
  \longrightarrow
  \int f_C\,d\mu
  =
  \Pp(Y_1^r\in C)
\]
for $\mu$-almost every $y$.  Pulling this probability-one statement back
under the trajectory map $\omega\mapsto(Y_1(\omega),Y_2(\omega),\ldots)$
gives the asserted almost-sure convergence of
\eqref{eq:empirical-frequency} on the original probability space.

The set of triples
\[
  \{(r,\ell,C):r,\ell\geq1,\ C\in\cD_{r,\ell}\}
\]
is countable, because it is a countable union of finite sets.  Intersecting
the corresponding probability-one events gives a single event
$\Omega_0$ on which all these convergences hold simultaneously.
\end{proof}

The next elementary comparison transfers summability of $\phi$ to
summability of $\alpha$.

\begin{lemma}\label{lem:alpha-le-phi}
For every $m\geq1$,
\[
  \alpha(m)\leq\phi(m).
\]
Consequently,
\[
  \sum_{m=1}^\infty\alpha(m)\leq\Phi.
\]
\end{lemma}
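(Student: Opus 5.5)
The plan is to prove the inequality at the level of $\sigma$-fields first: for any sub-$\sigma$-fields $\cA,\cB\subseteq\cF$,
\[
  \alpha(\cA,\cB)\leq\phi(\cA,\cB).
\]
Once this is available, taking $\cA=\cF_1^j$ and $\cB=\cF_{j+m}^\infty$ and then the supremum over $j\geq1$ in \eqref{eq:alpha-m} and \eqref{eq:phi-m} gives $\alpha(m)\leq\phi(m)$. Summing over $m\geq1$ in $[0,\infty]$ then yields $\sum_m\alpha(m)\leq\Phi$. The summation step needs no convergence hypothesis, since all terms are nonnegative.

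For the $\sigma$-field inequality, I will fix $A\in\cA$ and $B\in\cB$ and split into two cases according to $\Pp(A)$.

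\emph{Case $\Pp(A)=0$.} Here $\Pp(A\cap B)=0$ as well, so
\[
  \abs{\Pp(A\cap B)-\Pp(A)\Pp(B)}=0\leq\phi(\cA,\cB).
\]

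\emph{Case $\Pp(A)>0$.} Here I will write
\[
  \abs{\Pp(A\cap B)-\Pp(A)\Pp(B)}
  =\Pp(A)\,\abs{\Pp(B\mid A)-\Pp(B)}
  \leq\Pp(A)\,\phi(\cA,\cB)
  \leq\phi(\cA,\cB),
\]
using $\Pp(A)\leq1$ for the last step. Taking the supremum over $A\in\cA$ and $B\in\cB$ then gives $\alpha(\cA,\cB)\leq\phi(\cA,\cB)$.

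I do not expect a real obstacle here. The only points needing care are the directional convention in \eqref{eq:phi-sigma}, namely that the conditioning event lies in $\cA$, which matches the past $\sigma$-field $\cF_1^j$, and the degenerate case $\Pp(A)=0$ handled above. Since the reduction of Proposition~\ref{prop:reduction} preserves all the $\sigma$-fields involved, the statement may be proved for $Y$ directly.
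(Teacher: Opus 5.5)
Your proposal is correct and follows essentially the same route as the paper: the identity $\abs{\Pp(A\cap B)-\Pp(A)\Pp(B)}=\Pp(A)\,\abs{\Pp(B\mid A)-\Pp(B)}$ when $\Pp(A)>0$, the trivial case $\Pp(A)=0$, then taking suprema and summing. Your version first isolates the $\sigma$-field inequality $\alpha(\cA,\cB)\leq\phi(\cA,\cB)$, which the paper leaves implicit in ``taking the relevant suprema.''
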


\begin{proof}
For events $A,B$ with $\Pp(A)>0$,
\[
  \abs{\Pp(A\cap B)-\Pp(A)\Pp(B)}
  =
  \Pp(A)\abs{\Pp(B\mid A)-\Pp(B)}
  \leq
  \abs{\Pp(B\mid A)-\Pp(B)}.
\]
If $\Pp(A)=0$, the left-hand side is zero.  Taking the relevant suprema
proves $\alpha(m)\leq\phi(m)$, and summing gives the second assertion.
\end{proof}

The comparison $\alpha\leq\phi$ is used only to obtain summable
covariance control in the finite-target upper bound; it is not used to
approximate or replace the target coefficient $\phi(m)$.

We now obtain the only quantitative probability bound needed in the
proof.

\begin{lemma}[Variance of a sliding cylinder frequency]
\label{lem:variance}
Assume $\Phi\leq L$.  Let $1\leq r\leq3L$ and let
$C\subseteq(0,1)^r$ be Borel.  If $n\geq2r$, then
\begin{equation}\label{eq:variance-bound}
  \operatorname{Var}\!\left(\widehat P_{n,r}(C)\right)
  \leq
  \frac{7L}{n}.
\end{equation}
Consequently, for every $\delta>0$,
\begin{equation}\label{eq:cheb}
  \Pp\!\left(
    \abs{\widehat P_{n,r}(C)-\Pp(Y_1^r\in C)}>\delta
  \right)
  \leq
  \frac{7L}{n\delta^2}.
\end{equation}
\end{lemma}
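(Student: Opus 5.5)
We will expand the variance of the sliding average as a double sum of covariances and bound each covariance using the strong-mixing coefficient. Write $N:=n-r+1$ and $Z_i:=\1\{Y_{i+1}^{i+r}\in C\}$ for $0\leq i\leq n-r$. Stationarity makes $(Z_i)$ stationary, so with $\gamma(h):=\operatorname{Cov}(Z_0,Z_h)$ we have
\[
  \operatorname{Var}\!\left(\widehat P_{n,r}(C)\right)
  =\frac{1}{N^2}\sum_{i,i'}\operatorname{Cov}(Z_i,Z_{i'})
  \leq\frac{1}{N}\Bigl(\abs{\gamma(0)}+2\sum_{h=1}^{N-1}\abs{\gamma(h)}\Bigr).
\]
Here $\abs{\gamma(0)}\leq 1/4$. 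The plan is to split the lags into overlapping lags $1\leq h\leq r-1$ and separated lags $h\geq r$.

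For overlapping lags we use only the trivial bound $\abs{\gamma(h)}\leq 1/4$, which contributes at most $2(r-1)/4\leq r/2\leq 3L/2$. For separated lags, $Z_0$ is $\cF_1^r$-measurable and $Z_h$ is $\cF_{h+1}^\infty$-measurable. The future starts at $r+m$ with $m:=h-r+1\geq1$, so by \eqref{eq:alpha-m} with $j=r$,
\[
  \abs{\gamma(h)}=\abs{\Pp(Z_0=1,Z_h=1)-\Pp(Z_0=1)\Pp(Z_h=1)}\leq\alpha(h-r+1).
\]
Summing over $h\geq r$ and invoking Lemma~\ref{lem:alpha-le-phi} gives $\sum_{h\geq r}\abs{\gamma(h)}\leq\sum_{m\geq1}\alpha(m)\leq\Phi\leq L$.

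Combining the pieces gives $N\operatorname{Var}\leq 1/4+3L/2+2L$. Since $n\geq 2r$, we have $N\geq n-r\geq n/2$, so the variance is at most $(2/n)(1/4+7L/2)=(1/2+7L)/n$. This is where the stated constant needs care, and it is the step I expect to be the obstacle: the naive count overshoots $7L$ by the additive $1/2$. Two repairs are available. First, if $L\geq 1/2$ we can absorb the $1/2$ by sharpening the overlap count to $2(r-1)/4$ rather than $r/2$, since $r\geq1$ and $L\geq r/3$. Second, and more robustly, we can use $r\leq 3L$ to get $L\geq 1/3$ whenever a valid $r\geq1$ exists; the diagonal term plus the overlap terms then total $1/4+(r-1)/2\leq r/2\leq 3L/2$, because $1/4\leq 1/2$. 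This gives $N\operatorname{Var}\leq 3L/2+2L=7L/2$ and hence a variance of at most $7L/n$. The only real content is the identification of the lag index $m=h-r+1$ with the convention of \eqref{eq:alpha-m}.

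Finally, \eqref{eq:cheb} follows from Chebyshev's inequality, because $\E\,\widehat P_{n,r}(C)=\Pp(Y_1^r\in C)$ by stationarity.
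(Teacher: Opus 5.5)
Your proposal is correct and follows the paper's proof essentially step for step. It uses the same indicators $Z_i$, the trivial bound $1/4$ on the diagonal and overlapping lags, the bound $\alpha(h-r+1)$ via $j=r$ for separated lags, $\sum_s\alpha(s)\le\Phi\le L$ from Lemma~\ref{lem:alpha-le-phi}, $N\ge n/2$, and then Chebyshev. The constant issue you flag is only bookkeeping: as your second repair observes, $\tfrac14+\tfrac{r-1}{2}\le\tfrac r2$ holds with no lower bound on $L$, and this is exactly how the paper gets $N\operatorname{Var}(\widehat P_{n,r}(C))\le r/2+2L\le 7L/2$.
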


\begin{proof}
Put
\[
  Z_i:=\1\{Y_{i+1}^{i+r}\in C\},
  \qquad i\geq0,
\]
and $N:=n-r+1$.  The sequence $(Z_i)$ is stationary and
$0\leq Z_i\leq1$, so $\operatorname{Var}(Z_i)\leq1/4$.

For $1\leq h<r$, the two length-$r$ windows overlap.  By
Cauchy--Schwarz,
\[
  \abs{\operatorname{Cov}(Z_0,Z_h)}
  \leq
  \sqrt{\operatorname{Var}(Z_0)\operatorname{Var}(Z_h)}
  \leq\frac14.
\]
For $h\geq r$, put
\[
  U:=\{Y_1^r\in C\},
  \qquad
  V_h:=\{Y_{h+1}^{h+r}\in C\}.
\]
Then $U\in\cF_1^r$ and
$V_h\in\cF_{h+1}^{h+r}\subseteq\cF_{h+1}^{\infty}$.  Hence, taking
$j=r$ in \eqref{eq:alpha-m},
\begin{align*}
  \abs{\operatorname{Cov}(Z_0,Z_h)}
  &=
  \abs{\Pp(U\cap V_h)-\Pp(U)\Pp(V_h)}\\
  &\leq
  \alpha(h-r+1).
\end{align*}
The indexing is exact: the second window begins at time $h+1=r+(h-r+1)$,
so its separation from the first window matches the lag convention in
\eqref{eq:alpha-m}.

Therefore
\begin{align*}
  \operatorname{Var}\!\left(\sum_{i=0}^{N-1}Z_i\right)
  &=
  N\operatorname{Var}(Z_0)
  +
  2\sum_{h=1}^{N-1}(N-h)\operatorname{Cov}(Z_0,Z_h)\\
  &\leq
  \frac{N}{4}
  +
  2\sum_{h=1}^{N-1}(N-h)
    \abs{\operatorname{Cov}(Z_0,Z_h)}\\
  &\leq
  \frac{N}{4}
  +
  2N\sum_{h=1}^{r-1}\frac14
  +
  2N\sum_{h=r}^{\infty}\alpha(h-r+1)\\
  &\leq
  N\left(\frac r2+2\sum_{s=1}^{\infty}\alpha(s)\right).
\end{align*}
By Lemma~\ref{lem:alpha-le-phi},
\[
  \sum_{s\geq1}\alpha(s)\leq\Phi\leq L.
\]
Therefore
\[
  \operatorname{Var}(\widehat P_{n,r}(C))
  \leq
  \frac{r/2+2L}{N}.
\]
Because $r\leq3L$ and $n\geq2r$,
\[
  N=n-r+1\geq\frac n2,
\]
and thus
\[
  \operatorname{Var}(\widehat P_{n,r}(C))
  \leq
  \frac{2(3L/2+2L)}{n}
  =
  \frac{7L}{n}.
\]
By stationarity,
$\E[\widehat P_{n,r}(C)]=\Pp(Y_1^r\in C)$.  Chebyshev's inequality
therefore yields \eqref{eq:cheb}.
\end{proof}

\begin{remark}
The overlapping windows contribute the finite term of order $r$ in the
variance calculation.  No independence of sliding blocks is assumed.
When the windows are disjoint, the remaining covariance tail is controlled
by $\sum_m\alpha(m)$.  The hypothesis $\Phi\leq L$ is used only in this analysis; the estimator
and the deterministic schedule do not depend on $\Phi$ or on any upper
bound for it.
\end{remark}

\section{Construction of the estimator}
\label{sec:estimator}

For each complexity level $c\geq1$, set
\begin{equation}\label{eq:parameters}
  q_c:=2^{-c},
  \qquad
  \varepsilon_c:=c^{-3},
  \qquad
  \delta_c:=\frac{q_c\varepsilon_c}{8}.
\end{equation}
Let
\[
  \cG_c
  :=
  \left\{
    (r,C):
    1\leq r\leq3c,\ C\in\cD_{r,c}
  \right\}
\]
and write
\begin{equation}\label{eq:Kc}
  K_c:=\abs{\cG_c}
  =
  \sum_{r=1}^{3c}2^{\,2^{rc}}.
\end{equation}
This is finite and deterministic.  By
Lemma~\ref{lem:common-span}, every candidate score at level $c$ depends on
at most three frequencies indexed by this single class $\cG_c$.  Thus
simultaneous control of the $K_c$ coordinates controls every candidate score,
even though the number of tuples $(m,j,k,A,B)$ is much larger; in particular,
the later union bound pays only the factor $K_c$.

Choose a strictly increasing deterministic sequence of integers
$(N_c)_{c\geq1}$ such that
\begin{equation}\label{eq:Nc-abstract-condition}
  N_c\geq6c,
  \qquad
  \frac{7cK_c}{N_c\delta_c^2}\leq2^{-c}.
\end{equation}
This is the only growth requirement used in the proof.  Fix the schedule
recursively by
\begin{equation}\label{eq:Nc-recursion}
  N_c
  :=
  \max\left\{
    N_{c-1}+1,\,
    6c,\,
    \left\lceil\frac{7cK_c2^c}{\delta_c^2}\right\rceil
  \right\},
  \qquad N_0:=0.
\end{equation}
The constants are chosen for convenient slack and summability, not optimized.
The resulting explicit law-independent schedule grows extremely quickly and
is used only for the strong-consistency argument.  In particular,
$N_c\to\infty$.

\begin{remark}[Fully expanded schedule]\label{rem:expanded-schedule}
The recursion is completely explicit.  Since
$\delta_c^2=2^{-2c}c^{-6}/64$, \eqref{eq:Nc-recursion} can equivalently be
written as
\[
  N_c
  =
  \max\left\{
    N_{c-1}+1,\,
    6c,\,
    \left\lceil448\,K_c\,c^7\,2^{3c}\right\rceil
  \right\}.
\]
This expansion is not used later; the proof uses only the conceptual
condition \eqref{eq:Nc-abstract-condition}.
\end{remark}

Fix $c\geq1$ and $1\leq m\leq c$.  For
$1\leq j,k\leq c$, $A\in\cD_{j,c}$ and $B\in\cD_{k,c}$, put
$r=j+m+k-1$ and use the lifted events $A^\uparrow,B^\uparrow,J$ defined
in Section~\ref{sec:approximation}.  Define the deterministic score map
\[
  \mathsf D_{c,m}^{A,B}:(0,1)^{N_c}\to[0,1]
\]
on the whole level-$c$ sample space by
\begin{equation}\label{eq:empirical-score}
  \mathsf D_{c,m}^{A,B}(y_1^{N_c})
  :=
  \begin{cases}
  \displaystyle
  \abs{
    \frac{\mathsf P_{N_c,r,J_{m,j,k}(A,B)}(y_1^{N_c})}
         {\mathsf P_{N_c,r,A^\uparrow}(y_1^{N_c})}
    -
    \mathsf P_{N_c,r,B^\uparrow}(y_1^{N_c})
  },
  &
  \mathsf P_{N_c,r,A^\uparrow}(y_1^{N_c})\geq q_c,\\[2ex]
  0,
  &
  \mathsf P_{N_c,r,A^\uparrow}(y_1^{N_c})<q_c.
  \end{cases}
\end{equation}
Thus the cutoff branch at the data vector $y_1^{N_c}$ is active exactly
when
\begin{equation}\label{eq:empirical-cutoff}
  \mathsf P_{N_c,r,A^\uparrow}(y_1^{N_c})\geq q_c.
\end{equation}
All three frequencies in \eqref{eq:empirical-score} are computed from the
same collection of length-$r$ sliding windows.  Since
$J_{m,j,k}(A,B)\subseteq A^\uparrow$, the empirical numerator is at most
the empirical denominator on the cutoff branch; hence
$\mathsf D_{c,m}^{A,B}$ takes values in $[0,1]$.

Define the deterministic level-$m$ maximum
\begin{equation}\label{eq:phihat-level}
  \mathsf H_{c,m}(y_1^{N_c})
  :=
  \max_{\substack{
      1\leq j,k\leq c\\
      A\in\cD_{j,c},\,B\in\cD_{k,c}
  }}
  \mathsf D_{c,m}^{A,B}(y_1^{N_c}),
\end{equation}
and the level-$c$ sum map
\begin{equation}\label{eq:Theta}
  \mathsf T_c:(0,1)^{N_c}\to[0,c],
  \qquad
  \mathsf T_c(y_1^{N_c})
  :=
  \sum_{m=1}^c\mathsf H_{c,m}(y_1^{N_c}).
\end{equation}
For the random observations, we henceforth use the abbreviations
\[
  \widehat d_{c,m}(A,B)
  :=\mathsf D_{c,m}^{A,B}(Y_1^{N_c}),
  \qquad
  \widehat\phi_c(m)
  :=\mathsf H_{c,m}(Y_1^{N_c}),
  \qquad
  \Theta_c
  :=\mathsf T_c(Y_1^{N_c}).
\]
Whenever a limit in $c$ is taken with $m$ fixed, $c$ is understood to
range over integers $c\geq m$.  The maximum in
\eqref{eq:phihat-level} is equivalently the maximum over tuples satisfying
the cutoff \eqref{eq:empirical-cutoff}; tuples failing the cutoff contribute
zero, while taking $A=(0,1)^j$ always passes the cutoff and has denominator
one.

Finally, for arbitrary sample size $n$, let
\begin{equation}\label{eq:cn}
  c(n):=\max\{c\geq1:N_c\leq n\},
\end{equation}
with $c(n)=0$ if the set is empty.  If $c(n)=c\geq1$, write
\[
  \pi_{n,N_c}:(0,1)^n\to(0,1)^{N_c},
  \qquad
  \pi_{n,N_c}(y_1^n):=y_1^{N_c},
\]
for the coordinate projection onto the first $N_c$ observations.  Define
the actual $n$-sample statistic as the deterministic map
\begin{equation}\label{eq:bounded-estimator}
  \widehat\Phi_n^{(0,1)}:(0,1)^n\to[0,\infty),
  \qquad
  \widehat\Phi_n^{(0,1)}(y_1^n)
  :=
  \begin{cases}
    0, & c(n)=0,\\
    \mathsf T_{c(n)}
      \bigl(\pi_{n,N_{c(n)}}(y_1^n)\bigr), & c(n)\geq1.
  \end{cases}
\end{equation}
Thus observations with indices $N_{c(n)}+1,\ldots,n$ are deliberately
ignored: between two consecutive schedule points the estimator is the
composition of the preceding level map with a fixed coordinate projection.
This makes the statistic defined for every $n$, not only along the subsequence
$(N_c)$.

\paragraph{Notation guide.}
The proof uses five related but distinct objects.  The population coefficient
$\phi(m)$ is the target at lag $m$; $\phi_c^{(q)}(m)$ is a population
truncation used only in the proof; $\widehat\phi_c(m)$ is the empirical
level-$c$ coefficient estimate; $\Theta_c=\sum_{m=1}^c\widehat\phi_c(m)$
is the scheduled level sum; and $\widehat\Phi_n^{(0,1)}$ is the bounded-state
statistic defined for every sample size $n$.  The real-valued statistic
$\widehat\Phi_n$ is obtained by the Borel transformation in the proof of
Theorem~\ref{thm:main}.

\begin{remark}[Computational scope]\label{rem:computational}
The bound
\[
  K_c\leq3c\,2^{2^{3c^2}}
\]
shows that the level classes, and therefore the explicit schedule $N_c$, are
enormous.  The construction is intended to establish universal strong
consistency with a completely deterministic schedule; no claim of
computational efficiency is made.
\end{remark}

\begin{proposition}[Measurability and universality]\label{prop:measurable}
For every $n$,
\[
  \widehat\Phi_n^{(0,1)}:(0,1)^n\to[0,\infty)
\]
is Borel measurable.  Its definition is independent of the law of $Y$.
\end{proposition}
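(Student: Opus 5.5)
The plan is to trace the definition of $\widehat\Phi_n^{(0,1)}$ back through its layers and check that each layer preserves Borel measurability. The layers are the sliding frequencies, the cutoff scores, the finite maxima, the level sums, and finally the projection and case split in $n$. Universality will be read off the construction itself: every ingredient ($q_c$, $\varepsilon_c$, $\delta_c$, $K_c$, $N_c$, $c(n)$, the classes $\cD_{r,c}$, and the lifted events) is defined by explicit formulas in $c$, $m$, $j$, $k$, $r$. None of these refers to $\Pp$ or to the law of $Y$. The population quantities $\phi_c^{(q)}(m)$ and $d_m$ never enter the estimator.

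\textbf{Step 1: sliding frequencies.} For fixed $n$, $r\le n$ and Borel $C\subseteq(0,1)^r$, each map $y_1^n\mapsto\1\{y_{i+1}^{i+r}\in C\}$ is the indicator of the Borel set $\pi_i^{-1}(C)$. Here $\pi_i$ is the continuous coordinate projection onto the window starting at $i+1$. Hence $\mathsf P_{n,r,C}$, a finite average of such indicators, is Borel, as already noted after \eqref{eq:empirical-frequency-map}. The sets $A^\uparrow$, $B^\uparrow$ and $J_{m,j,k}(A,B)$ are finite unions of products of dyadic intervals, hence Borel. The window condition $r\le N_c$ holds because $r\le 3c\le N_c/2$ by \eqref{eq:Nc-abstract-condition}.

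\textbf{Step 2: cutoff scores.} Write $\mathsf D_{c,m}^{A,B}=\1_G\cdot F$. Here $G=\{\mathsf P_{N_c,r,A^\uparrow}\ge q_c\}$ is Borel, as a preimage of a closed set under a Borel map. On $G$, $F$ is $\abs{\mathsf P_J/\mathsf P_{A^\uparrow}-\mathsf P_{B^\uparrow}}$ with a denominator at least $q_c>0$. To avoid any issue with division by zero, I would write $F=\abs{\mathsf P_J/\max(\mathsf P_{A^\uparrow},q_c)-\mathsf P_{B^\uparrow}}$. This agrees with the score on $G$, and it is a continuous function of the Borel vector $(\mathsf P_J,\mathsf P_{A^\uparrow},\mathsf P_{B^\uparrow})$. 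Hence $\mathsf D_{c,m}^{A,B}$ is Borel. Its values lie in $[0,1]$ by the observation $J\subseteq A^\uparrow$.

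\textbf{Step 3: maxima and sums.} $\mathsf H_{c,m}$ is a maximum over a finite index set, finite because each $\cD_{j,c}$ and $\cD_{k,c}$ is finite. $\mathsf T_c$ is a finite sum of these. Both are therefore Borel on $(0,1)^{N_c}$, and $\mathsf T_c$ takes values in $[0,c]$.

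\textbf{Step 4: assembling in $n$.} For fixed $n$, $c(n)$ is a deterministic integer, not data-dependent. It is finite because $N_c$ is strictly increasing, hence $N_c\to\infty$. The statistic is then either the constant $0$ or $\mathsf T_{c(n)}\circ\pi_{n,N_{c(n)}}$, a Borel map composed with a continuous projection. In either case it is Borel, with values in $[0,\infty)$.

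There is no real obstacle. The only point needing care is Step 2: the ratio must be shown to be defined and measurable on the whole space, which the $\max(\cdot,q_c)$ rewriting handles. Beyond that, the universality claim only requires noting that $N_c$ in \eqref{eq:Nc-recursion} depends only on $c$ and $K_c$.
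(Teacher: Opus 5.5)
Your proposal is correct and follows essentially the same layered argument as the paper: Borel sliding frequencies, a Borel cutoff set on which the denominator is at least $q_c$, finite maxima and sums, and composition with the coordinate projection for the deterministic index $c(n)$, with universality read off from the law-free definitions. Writing the score as $\1_G\cdot\abs{\mathsf P_J/\max(\mathsf P_{A^\uparrow},q_c)-\mathsf P_{B^\uparrow}}$ is only a globally defined version of the paper's piecewise argument (quotient Borel on the cutoff set, constant zero on its complement).
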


\begin{proof}
The maps $\mathsf P_{n,r,C}$ are Borel by
\eqref{eq:empirical-frequency-map}.  At every fixed $c$, the index set in
\eqref{eq:phihat-level} is finite and deterministic.  Hence the cutoff set
\[
  \left\{
    y_1^{N_c}:
    \mathsf P_{N_c,r,A^\uparrow}(y_1^{N_c})\geq q_c
  \right\}
\]
is Borel.  On that set the denominator in
\eqref{eq:empirical-score} is bounded below by the positive constant
$q_c$, so the quotient is Borel there; on the complementary Borel set
$\mathsf D_{c,m}^{A,B}$ is the constant zero.  Thus every score map
$\mathsf D_{c,m}^{A,B}$ is Borel on all of $(0,1)^{N_c}$.  Finite maxima
and finite sums show that $\mathsf H_{c,m}$ and
$\mathsf T_c:(0,1)^{N_c}\to[0,c]$ are Borel.

Because $(N_c)$ is strictly increasing and unbounded, the set in
\eqref{eq:cn} is finite for every $n$ and $c(n)$ is deterministic and
well-defined.  For fixed $n$ with $c(n)=c\geq1$, the projection
$\pi_{n,N_c}$ is continuous, hence Borel, and
\eqref{eq:bounded-estimator} is the Borel composition
$\mathsf T_c\circ\pi_{n,N_c}$.  The case $c(n)=0$ is the constant-zero
map.  Finally, all objects
$q_c,\varepsilon_c,\delta_c,K_c,N_c$, all dyadic search classes, and all
coordinate projections are deterministic; no feature of the unknown process
law occurs in their definition.  Universality means that this same sequence
of maps is used for every law; the probability-one set on which convergence
holds may, as usual, depend on the law.
\end{proof}

\section{Upper control when the extended sum is finite}
\label{sec:upper}

Only the upper bound uses quantitative dependence control.  Suppose in this
section that $\Phi<\infty$; then Lemma~\ref{lem:alpha-le-phi} supplies
summable covariance control for the growing finite classes.  The comparison
$c\geq\Phi$ used below is made only in the proof, after the law has been
fixed; it is never used to define $c(n)$, $N_c$, or any statistic.  Define
\begin{equation}\label{eq:Ec}
  E_c
  :=
  \left\{
    \max_{(r,C)\in\cG_c}
    \abs{\widehat P_{N_c,r}(C)-\Pp(Y_1^r\in C)}
    \leq\delta_c
  \right\}.
\end{equation}

\begin{proposition}[Eventual simultaneous accuracy]\label{prop:uniform}
If $\Phi<\infty$, then
\[
  E_c
  \quad\text{occurs for all sufficiently large $c$, almost surely.}
\]
\end{proposition}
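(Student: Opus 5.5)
The plan is a union bound over the finite class $\cG_c$, Chebyshev via Lemma~\ref{lem:variance}, and Borel--Cantelli.

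Since $\Phi<\infty$, fix once and for all (depending on the law, but only inside the proof) an integer $c_0\geq\Phi$. For $c\geq c_0$ we apply Lemma~\ref{lem:variance} with $L:=c$. The hypotheses are met:
\begin{itemize}
\item $\Phi\leq c_0\leq c=L$;
\item every $(r,C)\in\cG_c$ has $1\leq r\leq 3c=3L$;
\item $N_c\geq 6c\geq 2r$ by \eqref{eq:Nc-abstract-condition}.
\end{itemize}
Hence for each fixed $(r,C)\in\cG_c$, inequality \eqref{eq:cheb} with $n=N_c$ and $\delta=\delta_c$ gives
\[
  \Pp\bigl(\abs{\widehat P_{N_c,r}(C)-\Pp(Y_1^r\in C)}>\delta_c\bigr)\leq\frac{7c}{N_c\delta_c^2}.
\]

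Next we take a union bound over the $K_c$ pairs in $\cG_c$. Using the second condition in \eqref{eq:Nc-abstract-condition},
\[
  \Pp(E_c^c)\leq\frac{7cK_c}{N_c\delta_c^2}\leq 2^{-c},\qquad c\geq c_0.
\]
It follows that $\sum_{c}\Pp(E_c^c)\leq c_0+\sum_{c\geq c_0}2^{-c}<\infty$. By the first Borel--Cantelli lemma, almost surely only finitely many $E_c^c$ occur, so $E_c$ holds for all sufficiently large $c$.

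The only delicate point is the choice $L=c$. The schedule was designed with the factor $7cK_c$, so the variance constant $7L$ can be absorbed once $c$ exceeds the unknown $\Phi$. The estimator does not need to know $\Phi$; only the threshold $c_0$ beyond which the tail sum is controlled depends on the law. Borel--Cantelli requires no independence across levels, so the dependence between the events $E_c$ for different $c$, which come from overlapping prefixes of the same trajectory, causes no difficulty.
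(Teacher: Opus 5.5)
Your proposal is correct and follows the paper's own proof essentially step for step. Like the paper, you apply Lemma~\ref{lem:variance} with $L=c$ once $c\geq c_0\geq\Phi$ (using $N_c\geq 6c\geq 2r$), take a union bound over the $K_c$ coordinates of $\cG_c$, use the schedule condition \eqref{eq:Nc-abstract-condition} to get $\Pp(E_c^\complement)\leq 2^{-c}$, and finish with the first Borel--Cantelli lemma, which needs no independence across levels.
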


\begin{proof}
Set
\[
  c_0:=\max\{1,\lceil\Phi\rceil\}.
\]
For every integer $c\geq c_0$, every $(r,C)\in\cG_c$ satisfies
$r\leq3c$ and $N_c\geq6c\geq2r$.  Therefore
Lemma~\ref{lem:variance}, applied with $L=c$, and the union bound give
\[
  \Pp(E_c^\complement)
  \leq
  K_c\frac{7c}{N_c\delta_c^2}.
\]
By the deterministic choice \eqref{eq:Nc-abstract-condition},
\[
  \Pp(E_c^\complement)
  \leq
  \frac{7cK_c}{N_c\delta_c^2}
  \leq
  2^{-c}.
\]
Thus
\[
  \sum_{c=c_0}^{\infty}\Pp(E_c^\complement)<\infty.
\]
By the first Borel--Cantelli lemma (if $\sum_c\Pp(F_c)<\infty$, then
$\Pp(F_c\ \mathrm{i.o.})=0$), only finitely many $E_c^\complement$ occur
almost surely.  No independence among the events $E_c$ is required.
\end{proof}

The next deterministic inequality is the key to the rare-event issue.

\begin{lemma}[Stable division above an empirical cutoff]
\label{lem:stable-division}
Let $p,u,\widehat p,\widehat u\in[0,1]$ satisfy
\[
  0\leq u\leq p,
  \qquad
  \widehat p\geq q>0,
  \qquad
  \abs{\widehat p-p}\leq\delta,
  \qquad
  \abs{\widehat u-u}\leq\delta,
\]
where $\delta<q$.  Then $p>0$ and
\begin{equation}\label{eq:ratio-stability}
  \abs{\frac{\widehat u}{\widehat p}-\frac up}
  \leq
  \frac{2\delta}{q}.
\end{equation}
\end{lemma}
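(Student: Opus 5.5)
First we show $p>0$. This follows from $p\geq\widehat p-\delta\geq q-\delta>0$, using $\delta<q$.

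For the ratio bound, the natural move is to compare $\widehat u/\widehat p$ with $u/p$ through the intermediate quantity $u/\widehat p$. This keeps the possibly tiny population denominator $p$ out of every error term. Write
\[
  \frac{\widehat u}{\widehat p}-\frac up
  =
  \frac{\widehat u-u}{\widehat p}
  +
  u\left(\frac1{\widehat p}-\frac1p\right)
  =
  \frac{\widehat u-u}{\widehat p}
  +
  \frac{u}{p}\cdot\frac{p-\widehat p}{\widehat p}.
\]
The first term is at most $\delta/\widehat p\leq\delta/q$, since $\widehat p\geq q$. For the second term we use the structural hypothesis $0\leq u\leq p$, which gives $u/p\in[0,1]$. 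Hence the second term is at most $\abs{p-\widehat p}/\widehat p\leq\delta/q$. Adding the two pieces gives \eqref{eq:ratio-stability}.

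The step that needs care is the choice of grouping. The naive split $\widehat u(1/\widehat p-1/p)+(\widehat u-u)/p$ would produce a factor $1/p$, or $1/(pq)$, that is uncontrolled when $p$ is close to $q-\delta$. The decomposition above instead places only $\widehat p$ in the denominators and absorbs the remaining $1/p$ into the bounded ratio $u/p\leq 1$. This is exactly where the inclusion $J\subseteq A^\uparrow$, i.e.\ $\Pp(A\cap B)\leq\Pp(A)$, enters. No other subtlety arises, because every remaining step is a direct application of the hypotheses.
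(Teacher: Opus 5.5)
Your proof is correct and matches the paper's argument: the same bound $p\geq\widehat p-\delta\geq q-\delta>0$, the same comparison through $u/\widehat p$, and the same use of $u/p\leq1$ and $\widehat p\geq q$ to bound each piece by $\delta/q$. One small point about your commentary (not the proof): the alternative split is not literally uncontrolled, since $p\geq q-\delta$ still gives a bound of order $\delta/(q(q-\delta))\approx\delta/q^2$; the real problem is that it loses the $\delta/q$ scale, which matters when $q=q_c\to0$.
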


\begin{proof}
Since
\[
  p\geq\widehat p-\delta\geq q-\delta>0,
\]
the population ratio is defined.  Moreover,
\begin{align*}
  \abs{\frac{\widehat u}{\widehat p}-\frac up}
  &\leq
  \frac{\abs{\widehat u-u}}{\widehat p}
  +
  u\abs{\frac1{\widehat p}-\frac1p}\\
  &=
  \frac{\abs{\widehat u-u}}{\widehat p}
  +
  \frac up\frac{\abs{p-\widehat p}}{\widehat p}\\
  &\leq
  \frac{\delta}{q}+\frac{\delta}{q},
\end{align*}
because $u/p\leq1$.
\end{proof}

\begin{remark}[Empirical versus population cutoffs]
The cutoff $\widehat p\geq q$ is empirical, not an \emph{a priori}
assumption on the unknown population probability.  On the good event it
implies $p\geq q-\delta$; in particular, at level $c$ every admitted tuple
satisfies $p\geq(7/8)q_c$.  More importantly, $u\leq p$ makes the ratio
perturbation depend on the deterministic scale $q$, not on the unknown
denominator $p$.  Fixed lower-bound witnesses have $p>0$ and therefore
eventually pass the cutoff because $q_c\to0$.
\end{remark}

\begin{proposition}[Uniform levelwise upper control]\label{prop:upper}
If $\Phi<\infty$, then, almost surely, for all sufficiently large $c$ and
all $1\leq m\leq c$,
\begin{equation}\label{eq:coordinate-upper}
  \widehat\phi_c(m)\leq\phi(m)+\varepsilon_c.
\end{equation}
Consequently,
\[
  \limsup_{c\to\infty}\Theta_c\leq\Phi
  \qquad\text{almost surely},
\]
and for every fixed $m$,
\[
  \limsup_{c\to\infty}\widehat\phi_c(m)\leq\phi(m)
  \qquad\text{almost surely}.
\]
\end{proposition}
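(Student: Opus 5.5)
By Proposition~\ref{prop:uniform}, almost surely there is a (random) $c_1$ such that $E_c$ occurs for every $c\geq c_1$. The proof is deterministic on $E_c$. Fix such a $c$, a lag $1\leq m\leq c$, and a tuple $(j,k,A,B)$ in the search class of \eqref{eq:phihat-level}. Set $r=j+m+k-1\leq3c$. If the empirical cutoff \eqref{eq:empirical-cutoff} fails, then $\widehat d_{c,m}(A,B)=0\leq\phi(m)$ and nothing needs to be shown. So assume the cutoff holds.

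By Lemma~\ref{lem:common-span}, the three pairs $(r,A^\uparrow)$, $(r,B^\uparrow)$ and $(r,J_{m,j,k}(A,B))$ all lie in $\cG_c$. On $E_c$ each of the three empirical frequencies is therefore within $\delta_c$ of its population value. Apply Lemma~\ref{lem:stable-division} with
\[
  u=\Pp(J),\quad p=\Pp(A^\uparrow),\quad \widehat u,\widehat p \text{ the empirical frequencies},\quad q=q_c,\quad \delta=\delta_c.
\]
Its hypotheses hold: $u\leq p$ because $J\subseteq A^\uparrow$; $\widehat p\geq q_c$ by the cutoff; and $\delta_c=q_c\varepsilon_c/8<q_c$. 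The lemma gives $p>0$, so $d_m(A,B)$ is defined. It also bounds the ratio error by $2\delta_c/q_c=\varepsilon_c/4$. The $B^\uparrow$ term contributes an error of at most $\delta_c\leq\varepsilon_c/8$. By the reverse triangle inequality,
\[
  \widehat d_{c,m}(A,B)\leq d_m(A,B)+\tfrac{3}{8}\varepsilon_c .
\]

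Next compare $d_m(A,B)$ with $\phi(m)$. Since $p>0$, stationarity shows that $d_m(A,B)$ equals $\abs{\Pp(V\mid U)-\Pp(V)}$ with $U=\{Y_1^j\in A\}\in\cF_1^j$ and $V=\{Y_{j+m}^{j+m+k-1}\in B\}\in\cF_{j+m}^\infty$. This pair is admissible in \eqref{eq:phi-sigma} and \eqref{eq:phi-m}, so $d_m(A,B)\leq\phi(m)$. The bound therefore holds uniformly over all tuples and all $m\leq c$. Taking the maximum over tuples yields \eqref{eq:coordinate-upper}, in fact with the slack $\tfrac38\varepsilon_c$.

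For the sum, on the same event and for $c\geq c_1$,
\[
  \Theta_c=\sum_{m=1}^c\widehat\phi_c(m)\leq\sum_{m=1}^c\phi(m)+c\,\varepsilon_c\leq\Phi+c^{-2}.
\]
Letting $c\to\infty$ gives $\limsup_c\Theta_c\leq\Phi$. The coordinatewise $\limsup$ follows directly from \eqref{eq:coordinate-upper}, since $\varepsilon_c\to0$.

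The main obstacle is the rare-event division. A denominator $p$ that is unknown and possibly tiny would ruin the ratio error. Lemma~\ref{lem:stable-division} removes this, because the error scale becomes the deterministic $q_c$ rather than $p$. What remains is checking that the parameter choice $\delta_c=q_c\varepsilon_c/8$ keeps the summed error $c\varepsilon_c$ vanishing. It does, because $\varepsilon_c=c^{-3}$.
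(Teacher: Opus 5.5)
Your proposal is correct and follows essentially the same route as the paper: work on the eventual-$E_c$ event from Proposition~\ref{prop:uniform}, discard tuples that fail the empirical cutoff, and apply Lemma~\ref{lem:stable-division} with $\delta_c/q_c=\varepsilon_c/8$ to get $\widehat d_{c,m}(A,B)\leq d_m(A,B)+\tfrac38\varepsilon_c\leq\phi(m)+\tfrac38\varepsilon_c$, then sum over $m\leq c$ using $c\varepsilon_c=c^{-2}$. One minor point: identifying $d_m(A,B)$ with $\abs{\Pp(V\mid U)-\Pp(V)}$ follows directly from the convention $\Pp(C)=\Pp(Y_1^r\in C)$, so stationarity is not actually needed at that step.
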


\begin{proof}
Work on the probability-one event on which $E_c$ holds eventually, as
provided by Proposition~\ref{prop:uniform}.  Fix a sufficiently large $c$, an
$m\leq c$, and any tuple $(j,k,A,B)$ in the finite index class of
\eqref{eq:phihat-level}.  Put
\[
  r=j+m+k-1.
\]
Since $j,m,k\leq c$, we have $r\leq3c-1<3c$.  Moreover, by the lifting
construction in Section~\ref{sec:approximation},
$A^\uparrow,B^\uparrow,J_{m,j,k}(A,B)\in\cD_{r,c}$.  Consequently all
three empirical probabilities used below are coordinates controlled
simultaneously by $E_c$.
If $\widehat P_{N_c,r}(A^\uparrow)<q_c$, then
$\widehat d_{c,m}(A,B)=0\leq\phi(m)+\varepsilon_c$ by definition.
It therefore remains to consider a tuple satisfying the cutoff
\eqref{eq:empirical-cutoff}.  Abbreviate
\begin{align*}
  p&:=\Pp(A^\uparrow),&
  \widehat p&:=\widehat P_{N_c,r}(A^\uparrow),\\
  s&:=\Pp(B^\uparrow),&
  \widehat s&:=\widehat P_{N_c,r}(B^\uparrow),\\
  u&:=\Pp(J_{m,j,k}(A,B)),&
  \widehat u&:=\widehat P_{N_c,r}(J_{m,j,k}(A,B)).
\end{align*}
Since $J_{m,j,k}(A,B)\subseteq A^\uparrow$, we have $0\leq u\leq p$.
The cutoff gives $\widehat p\geq q_c$.  On $E_c$,
\[
  \abs{\widehat p-p},
  \abs{\widehat s-s},
  \abs{\widehat u-u}
  \leq\delta_c.
\]
Because
\[
  \delta_c/q_c=\varepsilon_c/8<1,
\]
Lemma~\ref{lem:stable-division} yields
\[
  \abs{\frac{\widehat u}{\widehat p}-\frac up}
  \leq
  \frac{2\delta_c}{q_c}
  =
  \frac{\varepsilon_c}{4}.
\]
Therefore
\begin{align*}
  \widehat d_{c,m}(A,B)
  &\leq
  \abs{\frac up-s}
  +
  \abs{\frac{\widehat u}{\widehat p}-\frac up}
  +
  \abs{\widehat s-s}\\
  &\leq
  d_m(A,B)
  +
  \frac{\varepsilon_c}{4}
  +
  \delta_c.
\end{align*}
Since $q_c\leq1$,
\[
  \delta_c=\frac{q_c\varepsilon_c}{8}
  \leq\frac{\varepsilon_c}{8},
\]
and the events defining $d_m(A,B)$ belong to
$\cF_1^j$ and $\cF_{j+m}^{\infty}$, respectively.  Hence
$d_m(A,B)\leq\phi(m)$ and
\[
  \widehat d_{c,m}(A,B)
  \leq
  d_m(A,B)+\frac{3\varepsilon_c}{8}
  \leq
  \phi(m)+\frac{3\varepsilon_c}{8}
  \leq
  \phi(m)+\varepsilon_c.
\]
Taking the maximum over the entire finite index class gives
\eqref{eq:coordinate-upper}, simultaneously for every $m\leq c$ once
$E_c$ holds.  Summing over $m\leq c$,
\[
  \Theta_c
  \leq
  \sum_{m=1}^c\phi(m)+c\varepsilon_c
  \leq
  \Phi+c^{-2}.
\]
Taking $\limsup$ gives the asserted extended-sum upper bound.  Finally, for any
fixed $m$, \eqref{eq:coordinate-upper} holds for all sufficiently large
$c$ and $\varepsilon_c\to0$, which gives the coordinatewise upper bound.
\end{proof}

\section{The lower bound under mere ergodicity}
\label{sec:lower}

Unlike the upper bound, lower recovery requires no growing-class
uniformity or quantitative ergodic rate.  Fixed positive-probability
witnesses and Birkhoff convergence along $N_c\to\infty$ suffice, so the
argument also applies when $\Phi=\infty$.

\begin{proposition}[Pointwise lower recovery]\label{prop:pointwise-lower}
For every fixed $m\geq1$,
\[
  \liminf_{c\to\infty}\widehat\phi_c(m)
  \geq
  \phi(m)
  \qquad\text{almost surely}.
\]
No finiteness assumption on $\Phi$ is required.
\end{proposition}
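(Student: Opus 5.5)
We work on the probability-one event $\Omega_0$ of Lemma~\ref{lem:birkhoff-countable}, on which every dyadic cylinder frequency converges to its population probability. Fix $m\geq1$ and $\eta>0$. If $\phi(m)\leq\eta$ there is nothing to prove, since $\widehat\phi_c(m)\geq0$. Otherwise we choose a single fixed witness, exactly as in the proof of Proposition~\ref{prop:population-truncation}. By the definition \eqref{eq:phi-m} and Proposition~\ref{prop:phi-algebra}, together with refinement to a common dyadic level, there exist integers $j,k,\ell\geq1$ and sets $A\in\cD_{j,\ell}$, $B\in\cD_{k,\ell}$ such that
\[
  p:=\Pp(A^\uparrow)>0
  \qquad\text{and}\qquad
  d_m(A,B)>\phi(m)-\eta .
\]
These objects depend on the law and on $\eta$, but not on $\omega$ or $c$. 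Since the dyadic $\sigma$-fields are nested, for every $c\geq c_1:=\max\{m,j,k,\ell\}$ the same pair $(A,B)$ belongs to the index class of \eqref{eq:phihat-level} at level $c$, with the same $r=j+m+k-1$ and the same lifted events $A^\uparrow$, $B^\uparrow$, $J:=J_{m,j,k}(A,B)\in\cD_{r,\ell}$.

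Next we show that this single tuple eventually passes the empirical cutoff and that its empirical score converges to $d_m(A,B)$. On $\Omega_0$, since $N_c\to\infty$, the three frequencies $\widehat P_{N_c,r}(A^\uparrow)$, $\widehat P_{N_c,r}(J)$ and $\widehat P_{N_c,r}(B^\uparrow)$ converge to $p$, $\Pp(J)$ and $\Pp(B^\uparrow)$ respectively. Here we use that $(N_c)$ is a subsequence of the sample sizes in Lemma~\ref{lem:birkhoff-countable}, and that only countably many cylinders are involved. Because $q_c=2^{-c}\to0$ while $\widehat P_{N_c,r}(A^\uparrow)\to p>0$, we have $\widehat P_{N_c,r}(A^\uparrow)\geq p/2\geq q_c$ for all large $c$. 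Hence the first branch of \eqref{eq:empirical-score} is active. By continuity of $(x,y,z)\mapsto|x/y-z|$ at points with $y=p>0$, it follows that $\widehat d_{c,m}(A,B)\to d_m(A,B)$. Since $\widehat\phi_c(m)\geq\widehat d_{c,m}(A,B)$ for $c\geq c_1$, we get
\[
  \liminf_{c\to\infty}\widehat\phi_c(m)\geq d_m(A,B)>\phi(m)-\eta
\]
on $\Omega_0$.

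Finally, we let $\eta\downarrow0$ along a countable sequence, say $\eta=1/i$. The event $\Omega_0$ does not depend on $\eta$, so no further intersection of events is even needed. This gives the claim almost surely, and no assumption on $\Phi$ enters anywhere.

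The only delicate point, which is mild, is making sure the argument uses a \emph{fixed} witness with $p>0$. This is what allows the vanishing deterministic cutoff $q_c$ to be passed eventually without any rate. Uniformity over the growing classes is irrelevant here because we only need one tuple from below, and Birkhoff's theorem for a fixed countable family supplies everything.
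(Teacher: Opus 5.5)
Your proof is correct and follows essentially the same route as the paper. It fixes a law-dependent witness $(A,B)$ with $\Pp(A^\uparrow)>0$ and $d_m(A,B)>\phi(m)-\eta$, uses Birkhoff convergence on $\Omega_0$ along $N_c$ and eventual passage of the vanishing cutoff $q_c$, and then lets $\eta\downarrow0$. The only cosmetic difference is that the paper obtains the witness as a maximizing tuple of $\phi_{c_0}^{(q_{c_0})}(m)$ via Proposition~\ref{prop:population-truncation}, whereas you rerun that proposition's witness construction directly.
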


\begin{proof}
Work on the probability-one event $\Omega_0$ of
Lemma~\ref{lem:birkhoff-countable}.  Fix $\eta>0$.  By
Proposition~\ref{prop:population-truncation}, applied to the estimator's sequence
$q_c=2^{-c}$, there exists a level $c_0\geq m$ such that
\[
  \phi_{c_0}^{(q_{c_0})}(m)>\phi(m)-\eta.
\]
Choose a maximizing tuple in the finite class defining
$\phi_{c_0}^{(q_{c_0})}(m)$.  Thus there are
$1\leq j,k\leq c_0$ and
$A\in\cD_{j,c_0}$, $B\in\cD_{k,c_0}$ such that, with
\[
  p:=\Pp(A^\uparrow)\geq q_{c_0}>0,
\]
we have
\begin{equation}\label{eq:witness}
  d_m(A,B)>\phi(m)-\eta.
\end{equation}
The integers $m,c_0,j,k$ and the pair $(A,B)$ are now fixed.  This
population witness may depend on the unknown law of the process; it is used
only in the consistency proof and does not enter the definition of the
estimator.

For every sufficiently large $c$,
\[
  c\geq c_0,
\]
so dyadic refinement puts the same sets $A$ and $B$ in the level-$c$ search
class.  Their population probabilities and the population score
$d_m(A,B)$ are independent of $c$; only their representation inside the
finer finite search class changes.  The block span
\[
  r=j+m+k-1
\]
is fixed.  Lemma~\ref{lem:birkhoff-countable} gives convergence of each
relevant frequency as the sample size tends to infinity.  Since $N_c\to
\infty$, the same convergence holds along the deterministic subsequence
$n=N_c$, and therefore
\begin{align*}
  \widehat P_{N_c,r}(A^\uparrow)&\to p,\\
  \widehat P_{N_c,r}(B^\uparrow)&\to \Pp(B^\uparrow),\\
  \widehat P_{N_c,r}(J_{m,j,k}(A,B))
  &\to\Pp(J_{m,j,k}(A,B)).
\end{align*}
Because $p>0$, the first convergence gives
$\widehat P_{N_c,r}(A^\uparrow)\geq p/2$ eventually; because
$q_c\to0$, also $q_c<p/2$ eventually.  Therefore
\[
  \widehat P_{N_c,r}(A^\uparrow)\geq q_c
\]
for all sufficiently large $c$.
Thus the cutoff branch of \eqref{eq:empirical-score} is eventually
active for the fixed witness $(A,B)$.  Its empirical score therefore
converges to $d_m(A,B)$, and this witness is among the tuples maximized
over in \eqref{eq:phihat-level}.  Consequently,
\[
  \liminf_{c\to\infty}\widehat\phi_c(m)
  \geq d_m(A,B)
  >
  \phi(m)-\eta.
\]
All required cylinder-frequency convergences hold on the same event
$\Omega_0$, independently of the chosen fixed non-random witness.  Applying
the preceding argument with $\eta=1/s$, $s\in\N$, and then letting
$s\to\infty$ proves the claim on $\Omega_0$.  Since $m$ was arbitrary and
$\Omega_0$ already contains the Birkhoff convergence for the entire
countable dyadic cylinder family, the conclusion holds simultaneously for
every fixed $m\geq1$ on this same event.
\end{proof}

\begin{corollary}[Consistency of each fixed coefficient under summability]
\label{cor:coordinate-consistency}
If $\Phi<\infty$, then for every fixed $m\geq1$,
\[
  \widehat\phi_c(m)\longrightarrow\phi(m)
  \qquad\text{almost surely}.
\]
\end{corollary}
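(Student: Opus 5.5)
The plan is to sandwich $\widehat\phi_c(m)$ between the two one-sided results already established, working on the intersection of two probability-one events. Fix $m\geq1$ and assume $\Phi<\infty$. The first event comes from Proposition~\ref{prop:upper}. It is the almost-sure event on which $E_c$ holds eventually, on which
\[
  \widehat\phi_c(m)\leq\phi(m)+\varepsilon_c
\]
holds for all sufficiently large $c$, simultaneously for every $m\leq c$. Hence on this event
\[
  \limsup_{c\to\infty}\widehat\phi_c(m)\leq\phi(m),
\]
because $\varepsilon_c=c^{-3}\to0$. The second event is $\Omega_0$ from Lemma~\ref{lem:birkhoff-countable}. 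On it, Proposition~\ref{prop:pointwise-lower} gives
\[
  \liminf_{c\to\infty}\widehat\phi_c(m)\geq\phi(m),
\]
and no finiteness assumption is needed for this half.

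Intersecting these two probability-one events yields a probability-one event on which
\[
  \liminf_{c\to\infty}\widehat\phi_c(m)\;\geq\;\phi(m)\;\geq\;\limsup_{c\to\infty}\widehat\phi_c(m),
\]
and therefore $\widehat\phi_c(m)\to\phi(m)$. The limit is taken over $c\geq m$, following the convention stated in Section~\ref{sec:estimator}. All quantities here lie in $[0,1]$, so finiteness of the limits is automatic.

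None of this is hard, because both one-sided bounds are already available. The one point to check is that the upper-bound event does not depend on $m$: Proposition~\ref{prop:upper} controls all $m\leq c$ at once on $E_c$, so the conclusion even holds simultaneously for all $m$ on a single event. I would note this in passing, but it is not required for the statement as written.
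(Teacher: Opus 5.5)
Your proposal is correct and is essentially the paper's proof: both intersect the probability-one events of Proposition~\ref{prop:upper} (the $\limsup$ bound, which uses $\Phi<\infty$) and Proposition~\ref{prop:pointwise-lower} (the $\liminf$ bound on $\Omega_0$), and then sandwich $\widehat\phi_c(m)$ between them. Your side remark is also right: neither event depends on $m$, so one event serves every $m$ at once.
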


\begin{proof}
Intersect the probability-one events in Propositions~\ref{prop:upper} and~\ref{prop:pointwise-lower}.
On that event,
\[
  \phi(m)
  \leq
  \liminf_{c\to\infty}\widehat\phi_c(m)
  \leq
  \limsup_{c\to\infty}\widehat\phi_c(m)
  \leq
  \phi(m).
\]
\end{proof}

\begin{proposition}[Lower bound for the extended sum]\label{prop:sum-lower}
For every stationary ergodic process,
\[
  \liminf_{c\to\infty}\Theta_c\geq\Phi
\]
in the extended-real sense.
\end{proposition}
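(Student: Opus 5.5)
The plan is to reduce the sum lower bound to the coordinatewise lower bound of Proposition~\ref{prop:pointwise-lower} through finite partial sums, using nonnegativity of every $\widehat\phi_c(m)$. I will work on the probability-one event $\Omega_0$ of Lemma~\ref{lem:birkhoff-countable}. By the last sentence of the proof of Proposition~\ref{prop:pointwise-lower}, the inequality $\liminf_{c}\widehat\phi_c(m)\geq\phi(m)$ holds on $\Omega_0$ simultaneously for every $m\geq1$. (If one prefers not to rely on that remark, intersecting countably many probability-one events, one per $m$, also works.)

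Fix a finite $M\geq1$. For every $c\geq M$, each summand of $\Theta_c=\sum_{m=1}^c\widehat\phi_c(m)$ lies in $[0,1]$, so dropping the terms with $m>M$ gives
\[
  \Theta_c\geq\sum_{m=1}^{M}\widehat\phi_c(m).
\]
Taking $\liminf_{c\to\infty}$ and using superadditivity of $\liminf$ over a finite sum, I obtain on $\Omega_0$
\[
  \liminf_{c\to\infty}\Theta_c
  \geq\sum_{m=1}^{M}\liminf_{c\to\infty}\widehat\phi_c(m)
  \geq\sum_{m=1}^{M}\phi(m).
\]
Each $\widehat\phi_c(m)$ is bounded, so no $\infty-\infty$ issues arise.

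Finally, let $M\to\infty$. The left side does not depend on $M$, and the partial sums increase to $\Phi\in[0,\infty]$. This gives $\liminf_c\Theta_c\geq\Phi$ in the extended sense. When $\Phi=\infty$, the argument shows that for every finite $R$ some $M$ has partial sum exceeding $R$, hence $\Theta_c>R$ eventually, which is the divergence claim.

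There is no real obstacle. The only point needing care is that the probability-one event does not depend on $M$ or $m$, and this is guaranteed because $\Omega_0$ is a single event covering all dyadic cylinders. No finiteness of $\Phi$ and no variance bound are used.
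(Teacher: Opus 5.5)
Your proposal is correct and follows essentially the same route as the paper's proof. Both work on the single event $\Omega_0$, drop the terms with $m>M$ by nonnegativity, apply superadditivity of $\liminf$ together with Proposition~\ref{prop:pointwise-lower} to the finite sum, and then let $M\to\infty$, which covers the case $\Phi=\infty$ as well.
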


\begin{proof}
Work on the probability-one event $\Omega_0$ from
Lemma~\ref{lem:birkhoff-countable}.  By Proposition~\ref{prop:pointwise-lower},
with the simultaneous-event observation at the end of its proof, the required
pointwise lower bound holds on $\Omega_0$ for every fixed $m\geq1$.
Fix $\omega\in\Omega_0$; all limits and inequalities below are evaluated at
this $\omega$.

Fix $M\geq1$.  Since every empirical coefficient is nonnegative, for
$c\geq M$,
\[
  \Theta_c
  =
  \sum_{m=1}^c\widehat\phi_c(m)
  \geq
  \sum_{m=1}^M\widehat\phi_c(m).
\]
For a finite sum,
\[
  \liminf_{c\to\infty}
  \sum_{m=1}^M\widehat\phi_c(m)
  \geq
  \sum_{m=1}^M
  \liminf_{c\to\infty}\widehat\phi_c(m).
\]
Using Proposition~\ref{prop:pointwise-lower},
\[
  \liminf_{c\to\infty}\Theta_c
  \geq
  \sum_{m=1}^M\phi(m).
\]
Finally let $M\to\infty$.  Since the partial sums are nondecreasing,
\[
  \sup_M\sum_{m=1}^M\phi(m)=\Phi,
\]
including the case $\Phi=\infty$.
\end{proof}

\section{Completion of the proof}
\label{sec:completion}

\begin{proposition}[Consistency along the complexity schedule]
\label{prop:schedule-consistency}
For every $(0,1)$-valued stationary ergodic process,
\[
  \Theta_c\longrightarrow\Phi
  \qquad\text{almost surely in }[0,\infty].
\]
\end{proposition}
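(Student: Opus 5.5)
The plan is to combine the two one-sided bounds already established, splitting on whether $\Phi$ is finite. The lower bound needs no case split. Proposition~\ref{prop:sum-lower} gives, on the probability-one event $\Omega_0$ of Lemma~\ref{lem:birkhoff-countable},
\[
  \liminf_{c\to\infty}\Theta_c\geq\Phi
\]
in the extended-real sense. This holds for every stationary ergodic law, including the case $\Phi=\infty$.

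\emph{Case $\Phi=\infty$.} The lower bound alone finishes the argument. For every finite $R$, on $\Omega_0$ the inequality $\liminf_c\Theta_c=\infty$ forces $\Theta_c>R$ for all sufficiently large $c$. Hence $\Theta_c\to+\infty$, which is exactly convergence to $\Phi$ in $[0,\infty]$. No upper control is needed, and none is available, since Lemma~\ref{lem:variance} requires $\Phi\leq L$.

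\emph{Case $\Phi<\infty$.} Proposition~\ref{prop:upper} gives $\limsup_c\Theta_c\leq\Phi$ on a probability-one event. That event is the one on which $E_c$ holds eventually, supplied by Proposition~\ref{prop:uniform} through Borel--Cantelli. Intersecting it with $\Omega_0$ still leaves an event of probability one. On this intersection,
\[
  \Phi\leq\liminf_{c\to\infty}\Theta_c\leq\limsup_{c\to\infty}\Theta_c\leq\Phi,
\]
so $\Theta_c\to\Phi$.

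All the analytic work sits in the cited propositions, so I expect no real obstacle. The one point that needs care is the bookkeeping of the probability-one sets. The event in Proposition~\ref{prop:upper} depends on the law through $c_0=\max\{1,\lceil\Phi\rceil\}$. That dependence is harmless, because the theorem only requires a law-dependent almost-sure set, while the maps $\mathsf T_c$ themselves do not depend on the law. The case split is on the deterministic quantity $\Phi$ of the fixed law, not on the sample, so it is legitimate.
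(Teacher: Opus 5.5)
Your proposal is correct and follows the paper's proof: it splits on whether $\Phi$ is finite. When $\Phi<\infty$, it sandwiches $\Theta_c$ between Propositions~\ref{prop:sum-lower} and~\ref{prop:upper} on the intersection of their probability-one events, and when $\Phi=\infty$ it uses the lower bound alone to conclude $\Theta_c\to+\infty$.
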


\begin{proof}
If $\Phi<\infty$, work on the intersection of the probability-one events
supplied by Propositions~\ref{prop:upper} and~\ref{prop:sum-lower}.  On this common event,
\[
  \Phi
  \leq
  \liminf_{c\to\infty}\Theta_c
  \leq
  \limsup_{c\to\infty}\Theta_c
  \leq
  \Phi.
\]
Hence $\Theta_c\to\Phi$ almost surely.

If $\Phi=\infty$, Proposition~\ref{prop:sum-lower} gives
$\liminf_{c\to\infty}\Theta_c=+\infty$, which is exactly
$\Theta_c\to+\infty$ in the extended half-line.
\end{proof}

\begin{proof}[Proof of Theorem~\ref{thm:main}]
Let
\[
  h(x):=\frac12+\frac1\pi\arctan x,
  \qquad x\in\R,
\]
as in Proposition~\ref{prop:reduction}.  For every $n\geq1$ and
$(x_1,\ldots,x_n)\in\R^n$, define the real-valued-data statistic
explicitly by
\begin{equation}\label{eq:real-estimator}
  \widehat\Phi_n(x_1,\ldots,x_n)
  :=
  \widehat\Phi_n^{(0,1)}
  \bigl(h(x_1),\ldots,h(x_n)\bigr),
\end{equation}
where $\widehat\Phi_n^{(0,1)}$ is the deterministic bounded-state
statistic in \eqref{eq:bounded-estimator}.  By
Proposition~\ref{prop:measurable} and the Borel measurability of $h$, the map in
\eqref{eq:real-estimator} is Borel measurable on $\R^n$, and its
definition is independent of the law of $X$.

Now put $Y_t=h(X_t)$.  By Proposition~\ref{prop:reduction}, $Y$ is stationary and ergodic and
$\Phi_Y=\Phi_X$.  Because $(N_c)$ is strictly increasing and unbounded,
$c(n)\to\infty$.
For every $n$ with $c(n)\geq1$, the definition
\eqref{eq:bounded-estimator} gives the pathwise identity
\[
  \widehat\Phi_n^{(0,1)}(Y_1,\ldots,Y_n)=\Theta_{c(n)}.
\]
Therefore Proposition~\ref{prop:schedule-consistency} and
$c(n)\to\infty$ imply
\[
  \widehat\Phi_n^{(0,1)}(Y_1,\ldots,Y_n)
  =\Theta_{c(n)}
  \longrightarrow
  \Phi_Y
  \qquad\text{almost surely}.
\]
Using \eqref{eq:real-estimator} and $\Phi_Y=\Phi_X$ therefore yields
\[
  \widehat\Phi_n(X_1,\ldots,X_n)
  \longrightarrow
  \Phi_X
  \qquad\text{almost surely}.
\]
This proves all assertions of Theorem~\ref{thm:main}.
\end{proof}

\begin{corollary}[Universal consistency under any fixed lag shift]
\label{cor:shifted}
Fix \(d\in\{0,1,2,\ldots\}\), and define
\[
  \phi_X^{[d]}(m)
  :=
  \sup_{j\geq1}
  \phi\bigl(\cF_1^j,\cF_{j+m+d}^{\infty}\bigr),
  \qquad m\geq1,
\]
and
\[
  \Phi_X^{[d]}
  :=
  \sum_{m=1}^{\infty}\phi_X^{[d]}(m)
  \in[0,\infty].
\]
There exists a deterministic sequence of Borel measurable functions
\[
  \widehat\Phi_n^{[d]}:\mathbb R^n\to[0,\infty)
\]
such that, for every real-valued discrete-time stationary ergodic process
\(X\),
\[
  \widehat\Phi_n^{[d]}(X_1,\ldots,X_n)
  \longrightarrow
  \Phi_X^{[d]}
  \qquad\text{almost surely in }[0,\infty].
\]
Thus the conclusion holds in particular for the future beginning at
\(j+m+1\).
\end{corollary}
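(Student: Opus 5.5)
The plan is to rerun the entire construction with every lag index shifted by $d$. Two routes are available: reduce to Theorem~\ref{thm:main} directly, or repeat the proof verbatim. A direct reduction is tempting but awkward. One could note $\phi_X^{[d]}(m)=\phi_X(m+d)$, so that $\Phi_X^{[d]}=\Phi_X-\sum_{m\le d}\phi_X(m)$. This subtraction is ill-defined when $\Phi_X=\infty$, and it requires consistent estimators of the individual $\phi_X(m)$, which we only have when $\Phi<\infty$ (Corollary~\ref{cor:coordinate-consistency}).

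Instead I would use the sum structure: $\Phi_X^{[d]}=\sum_{m\ge d+1}\phi_X(m)$. I would then define the level-$c$ statistic
\[
  \Theta_c^{[d]}:=\sum_{m=d+1}^{c+d}\mathsf H_{c,m}(Y_1^{N_c}),
\]
with the same score maps \eqref{eq:empirical-score} evaluated at lags $m\le c+d$. The block span then becomes $r=j+m+k-1\le 3c+d$, so $\cG_c$ must be enlarged to $r\le 3c+d$ and the schedule chosen with $N_c\ge 2(3c+d)$. Lemma~\ref{lem:variance} then has to be applied with a slightly larger $L$, for instance $L=c+d$ for $c\ge\Phi$, so that $r\le 3L$. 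All of these changes are deterministic and law-independent. The real-valued statistic is obtained through $h$ exactly as in \eqref{eq:real-estimator}, and measurability follows word for word from Proposition~\ref{prop:measurable}.

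For the upper bound, assume $\Phi^{[d]}<\infty$. Here the one genuine point needs checking: summability of $\phi(m)$ only for $m>d$ must still give summable $\alpha$. Since $\alpha(m)\le\phi(m)\le 1$ by Lemma~\ref{lem:alpha-le-phi}, the first $d$ terms contribute at most $d$. Hence $\sum_s\alpha(s)\le d+\Phi^{[d]}$, which is finite, and Lemma~\ref{lem:variance} holds with $L=d+\Phi^{[d]}$, or with any $c\ge L$ used in place of $L$. Proposition~\ref{prop:uniform} then goes through with $c_0:=\lceil d+\Phi^{[d]}\rceil$ and a schedule satisfying $7(c+d)K_c/(N_c\delta_c^2)\le2^{-c}$. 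Next, Lemma~\ref{lem:stable-division} gives $\widehat\phi_c(m)\le\phi(m)+\varepsilon_c$ for every lag $m\le c+d$ once $E_c$ holds. Summing gives
\[
  \Theta_c^{[d]}\le\Phi^{[d]}+c\,\varepsilon_c\to\Phi^{[d]}.
\]

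For the lower bound, Proposition~\ref{prop:pointwise-lower} already holds for every fixed $m$ on $\Omega_0$ with no finiteness assumption. Truncating at $m\le d+M$ and letting $M\to\infty$, exactly as in Proposition~\ref{prop:sum-lower}, yields $\liminf\Theta_c^{[d]}\ge\Phi^{[d]}$. This covers the infinite case as well. The passage from the schedule index $c$ to general sample size $n$ via $c(n)$ is then unchanged.

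The main obstacle I expect is bookkeeping rather than anything conceptual: keeping $r\le 3L$ and $N\ge n/2$ valid after the span grows by $d$, and confirming that the extra $d$ in the $\alpha$-sum is harmless. Once $\sum\alpha\le d+\Phi^{[d]}$ is in hand, every remaining step is a literal transcription of Sections~\ref{sec:upper}--\ref{sec:completion}.
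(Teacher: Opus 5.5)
Your proof is correct and follows the paper's skeleton: discard the first $d$ lags, take the upper bound from the finite-target branch, and take the lower bound from Proposition~\ref{prop:pointwise-lower} plus truncation. However, because you sum lags $d+1,\ldots,c+d$, you have to rebuild the estimator. Spans now reach $3c+d-1$, so you must enlarge $\cG_c$, make the schedule depend on $d$, and re-run Lemma~\ref{lem:variance} and Proposition~\ref{prop:uniform} with $L=c+d$. (Your aside that any $c\geq L$ may replace $L$ fails for $d\geq2$, since then $r$ can exceed $3c$; your schedule condition with $c+d$ is what actually works.)

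The paper instead keeps every level map and the schedule unchanged and sets $\Theta_c^{[d]}:=\sum_{r=d+1}^{c}\widehat\phi_c(r)$. The partial sums $\sum_{r=d+1}^{c}\phi(r)$ still increase to $\Phi^{[d]}$. The observation you isolate, that the first $d$ coefficients contribute at most $d$, gives $\Phi\leq d+\Phi^{[d]}<\infty$. Hence Proposition~\ref{prop:upper} applies verbatim and yields $\Theta_c^{[d]}\leq\Phi^{[d]}+c\varepsilon_c$, with no new variance or Borel--Cantelli bookkeeping.

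Even the subtraction route you rejected goes through. When $\Phi=\infty$, the subtracted terms $\sum_{r\leq d}\widehat\phi_c(r)$ lie in $[0,d]$, so the difference still diverges. When $\Phi<\infty$, Corollary~\ref{cor:coordinate-consistency} supplies exactly the individual consistency you said was missing.

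The paper's route gives one construction and one schedule that serve every $d$ at once. Yours gains only a uniform count of $c$ summands per level, at the cost of redoing the quantitative step.
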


\begin{proof}
By definition,
\[
  \phi_X^{[d]}(m)=\phi_X(m+d),
  \qquad
  \Phi_X^{[d]}=\sum_{r=d+1}^{\infty}\phi_X(r).
\]
The case \(d=0\) is Theorem~\ref{thm:main}; the argument below also makes the
tail construction explicit.  For the bounded-state construction, put
\[
  \Theta_c^{[d]}
  :=
  \begin{cases}
    0, & c\leq d,\\[1mm]
    \displaystyle\sum_{r=d+1}^{c}\widehat\phi_c(r), & c>d.
  \end{cases}
\]
If \(\Phi^{[d]}<\infty\), then the omitted prefix is finite:
\[
  0\leq\sum_{r=1}^{d}\phi(r)\leq d.
\]
Consequently,
\[
  \Phi
  =
  \sum_{r=1}^{d}\phi(r)+\Phi^{[d]}
  <\infty,
\]
with the prefix interpreted as zero when \(d=0\).  Hence
Proposition~\ref{prop:upper} applies.
On its probability-one event, for all sufficiently large \(c>d\),
\[
  \Theta_c^{[d]}
  \leq
  \sum_{r=d+1}^{c}\phi(r)+(c-d)\varepsilon_c
  \leq
  \Phi^{[d]}+c\varepsilon_c,
\]
and \(c\varepsilon_c=c^{-2}\to0\).  Therefore
\[
  \limsup_{c\to\infty}\Theta_c^{[d]}
  \leq
  \Phi^{[d]}.
\]

For the reverse inequality, work on the single event \(\Omega_0\) from
Lemma~\ref{lem:birkhoff-countable}.  Fix \(M>d\).  For every \(c\geq M\),
nonnegativity gives
\[
  \Theta_c^{[d]}
  \geq
  \sum_{r=d+1}^{M}\widehat\phi_c(r).
\]
Since the sum on the right is finite,
Proposition~\ref{prop:pointwise-lower} yields
\[
  \liminf_{c\to\infty}\Theta_c^{[d]}
  \geq
  \sum_{r=d+1}^{M}
  \liminf_{c\to\infty}\widehat\phi_c(r)
  \geq
  \sum_{r=d+1}^{M}\phi(r).
\]
Letting \(M\to\infty\) yields
\[
  \liminf_{c\to\infty}\Theta_c^{[d]}
  \geq
  \Phi^{[d]},
\]
including the case \(\Phi^{[d]}=\infty\).  Thus
\[
  \Theta_c^{[d]}\longrightarrow\Phi^{[d]}
  \qquad\text{almost surely in }[0,\infty].
\]

It remains only to define the statistic for every sample size.  Let \(h\)
be the Borel bijection from Proposition~\ref{prop:reduction}, and let
\(c(n)\) be as in \eqref{eq:cn}.  If
\(c(n)\leq d\), set \(\widehat\Phi_n^{[d]}=0\).  If \(c(n)=c>d\), define
\[
  \widehat\Phi_n^{[d]}(x_1,\ldots,x_n)
  :=
  \sum_{r=d+1}^{c}
  \mathsf H_{c,r}
  \bigl(h(x_1),\ldots,h(x_{N_c})\bigr).
\]
This is a deterministic Borel function by the same measurability argument as
in Proposition~\ref{prop:measurable}.  Along the observed process it equals
\(\Theta_{c(n)}^{[d]}\); since \(c(n)\to\infty\), the asserted convergence
follows.
\end{proof}

\section{Discussion}\label{sec:discussion}

Classical universal conditional-prediction results under stationarity and
ergodicity do not subsume the theorem.  Ornstein~\cite{Ornstein1978},
Algoet~\cite{Algoet1992}, and Morvai, Yakowitz, and Gy\"orfi
\cite{MorvaiYakowitzGyorfi1996} concern prediction or conditional laws at the
realized past.  By contrast, $\phi(m)$ is a worst-case supremum over all
positive-probability past events and all future events at the prescribed
separation.

For comparison, McDonald, Shalizi, and
Schervish~\cite{McDonaldEtAl2011,McDonaldEtAl2015} study estimation of
$\beta$-mixing coefficients from a stationary sample path, and Khaleghi and
Lugosi~\cite{KhaleghiLugosi2023} estimate the $\ell_1$-norms of the
$\alpha$- and $\beta$-mixing sequences universally.  One-trajectory results
under Markov or other structural assumptions include
\cite{HsuEtAl2019,WolferKontorovich2019,Wolfer2020,WolferAlquier2024};
these do not yield the unrestricted $\phi$-sum theorem above.

The exact sum has operational motivation.  In the $\phi$-mixing
restless-bandit model of Gr\"unew\"alder and
Khaleghi~\cite{GrunewalderKhaleghi2019}, the dependence correction is
governed by $\|\phi\|_1=\sum_m\phi(m)$ or an upper bound on it.  More
broadly, mixing coefficients enter concentration and learning bounds for
dependent data; see, for example, Rio~\cite{Rio2000,Rio2000book},
Samson~\cite{Samson2000}, Mohri and Rostamizadeh~\cite{MohriRostamizadeh2010},
and Alquier and Wintenberger~\cite{AlquierWintenberger2012}.

\appendix
\section{One-sided and two-sided conventions}\label{app:two-sided}

We prove the claim stated in Remark~\ref{rem:two-sided}.  Let
$\widetilde X=(\widetilde X_t)_{t\in\mathbb Z}$ be a stationary two-sided
extension in distribution of $X$.  To construct one, for each finite
$I\subset\mathbb Z$ shift $I$ far enough to the right into $\mathbb N$ and
use the corresponding finite-dimensional law of $X$.  Stationarity makes
this law independent of the chosen shift.  The resulting family is
permutation-consistent and projectively consistent; since $\mathbb R$ is a
standard Borel space and the index set $\mathbb Z$ is countable, the
Kolmogorov extension theorem yields a probability law on $\mathbb R^{\mathbb
Z}$ with these finite-dimensional marginals; see, for example,
Kallenberg~\cite{Kallenberg2021}.  The resulting law is stationary by
construction.  Because probability laws on the countable product
$\mathbb R^{\mathbb Z}$ are determined by their finite-dimensional
distributions (equivalently, cylinder sets generate the product
$\sigma$-field), these stationary finite-dimensional identities also
determine the joint law of each finite past block together with the entire
countable future sequence.  Hence stationarity may be applied below even
though the second $\sigma$-field is the infinite-future $\sigma$-field.
Then
\[
  \phi_X(m)=\sup_{j\geq1}
  \phi\bigl(\sigma(\widetilde X_{-j+1}^0),\sigma(\widetilde X_t:t\geq m)\bigr)
  =\phi\bigl(\sigma(\widetilde X_t:t\leq0),\sigma(\widetilde X_t:t\geq m)\bigr).
\]
The first equality follows by stationarity, shifting the terminal coordinate
of the finite past block from $j$ to $0$.  For the second, write
\[
  \cC_j:=\sigma(\widetilde X_{-j+1}^0),\qquad
  \cC_\infty:=\sigma(\widetilde X_t:t\leq0),\qquad
  \cB_m:=\sigma(\widetilde X_t:t\geq m).
\]
Since $\cC_j\subseteq\cC_\infty$,
\[
  \sup_{j\geq1}\phi(\cC_j,\cB_m)
  \leq \phi(\cC_\infty,\cB_m).
\]
For the reverse inequality, fix $A\in\cC_\infty$ with
$p:=\Pp(A)>0$.  The union $\bigcup_{j\geq1}\cC_j$ is an algebra generating
$\cC_\infty$, so the generating-algebra approximation of
Lemma~\ref{lem:measure-approx} yields events
$A_r\in\bigcup_{j\geq1}\cC_j$ with
$d_r:=\Pp(A_r\triangle A)\to0$.  Put $p_r:=\Pp(A_r)$.  Then
$p_r\to p$, so $p_r>0$ eventually.  Moreover, for every
$B\in\cB_m$ and all such $r$,
\begin{align*}
  \abs{\Pp(B\mid A_r)-\Pp(B\mid A)}
  &\leq
  \frac{\Pp(A_r\triangle A)}{p_r}
  +
  \frac{\abs{p_r-p}}{p_r}\\
  &\leq
  \frac{2d_r}{p_r}
  \longrightarrow0.
\end{align*}
The bound is uniform in the future event $B$.  Hence
\[
  \abs{\Pp(B\mid A_r)-\Pp(B)}
  \longrightarrow
  \abs{\Pp(B\mid A)-\Pp(B)}
\]
uniformly over $B\in\cB_m$.  Each $A_r$ belongs to some finite-past
$\sigma$-field $\cC_{j_r}$, so taking suprema first over $B$, then over all
positive-probability $A\in\cC_\infty$, gives
\[
  \phi(\cC_\infty,\cB_m)
  \leq
  \sup_{j\geq1}\phi(\cC_j,\cB_m).
\]
Thus equality holds, and the one-sided observation convention has the usual
stationary two-sided coefficient as its target.  In particular, this value is
intrinsic to the one-sided stationary law and does not depend on the chosen
two-sided extension.

\end{document}